\newtheorem{theorem}{Theorem}
\newtheorem{lemma}{Lemma}
\newtheorem{false statement}{False statement}
\theoremstyle{definition}
\newtheorem{claim}{Claim}
\newtheorem{case}{Case}
\newtheorem{subcase}{Case}[case]
\newcommand{\cl}{{\rm cl}}
\begin{document}

\title{\bf\Large Degree conditions restricted to induced paths for hamiltonicity of claw-heavy graphs
\thanks{Supported by NSFC (11271300) and the project NEXLIZ --
CZ.1.07/2.3.00/30.0038, which is co-financed by the European Social
Fund and the state budget of the Czech Republic. Email addresses:
{\tt libinlong@mail.nwpu.edu.cn} (B. Li), {\tt ningbo-maths@163.com}
(B. Ning), {\tt sgzhang@nwpu.edu.cn} (S. Zhang).}}

\date{}

\author{Binlong Li$^{a,c}$, Bo Ning$^{b}$ and Shenggui Zhang$^{a}$\thanks{Corresponding author.}\\[2mm]
\small $^a$ Department of Applied Mathematics, School of Science, \\
\small Northwestern Polytechnical University, Xi'an, Shaanxi 710072,
P.R.~China\\
\small $^b$ Center for Applied Mathematics, \\
\small Tianjin University, Tianjin 300072, P. R.~China\\
\small $^c$ European Centre of Excellence NTIS, \\
\small University of West Bohemia, 30614 Pilsen, Czech Republic}
\date{}

\maketitle

\begin{abstract}
Broersma and Veldman proved that every 2-connected claw-free and $P_6$-free graph is hamiltonian. Chen et al. extended {this result} by proving every 2-connected claw-heavy and $P_6$-free graph is hamiltonian. On the other hand, Li et al. constructed a class of {2-connected graphs which are claw-heavy and $P_6$-\emph{o}-heavy but} not hamiltonian. In this paper we further give some Ore-type degree conditions restricting to induced $P_6$s of a 2-connected claw-heavy graph that can guarantee the graph to be hamiltonian. This improves some previous related results.

\medskip
\noindent {\bf Keywords:} claw-heavy graph; degree condition; hamiltonian graph;
closure theory
\smallskip
\end{abstract}

\section{Introduction}

Throughout this paper, the graphs considered are undirected, finite
and simple. For terminology and notations not defined here, we
refer the reader to Bondy and Murty \cite{Bondy_Murty}.

Let $G$ be a graph. For a given graph $H$, we say that $G$ is
\emph{$H$-free} if $G$ contains no induced subgraph isomorphic to
$H$. In this case, we call $H$ a \emph{forbidden subgraph} of $G$.
Note that if $H_1$ is an induced subgraph of $H_2$, then an
$H_1$-free graph is also $H_2$-free.

The bipartite graph $K_{1,3}$ is {called} the \emph{claw}. Instead of
$K_{1,3}$-free, we say that a graph is \emph{claw-free} if it does
not contain a copy of $K_{1,3}$ as an induced subgraph. As usual, we
use $P_i$ to denote the path of order $i$. Some other special graphs {used in this paper}
are shown in Figure 1.

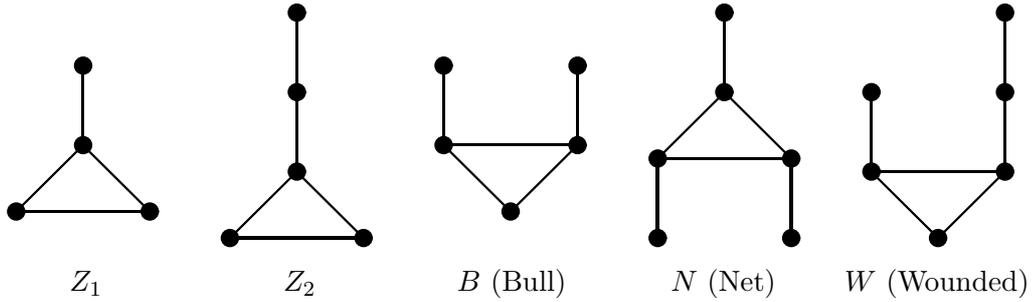
\begin{figure}

\begin{center}
\begin{picture}(410,125)
\thicklines

\put(0,0){\put(20,40){\circle*{6}} \put(70,40){\circle*{6}}
\multiput(45,65)(0,30){2}{\put(0,0){\circle*{6}}}
\put(20,40){\line(1,0){50}} \put(20,40){\line(1,1){25}}
\put(70,40){\line(-1,1){25}} \put(45,65){\line(0,1){30}}
\put(40,10){$Z_1$}}

\put(80,0){\put(20,30){\circle*{6}} \put(70,30){\circle*{6}}
\multiput(45,55)(0,30){3}{\put(0,0){\circle*{6}}}
\put(20,30){\line(1,0){50}} \put(20,30){\line(1,1){25}}
\put(70,30){\line(-1,1){25}} \put(45,55){\line(0,1){60}}
\put(40,10){$Z_2$}}

\put(160,0){\put(45,40){\circle*{6}} \put(45,40){\line(-1,1){25}}
\put(45,40){\line(1,1){25}} \put(20,65){\line(1,0){50}}
\multiput(20,65)(50,0){2}{\multiput(0,0)(0,30){2}{\put(0,0){\circle*{6}}}
\put(0,0){\line(0,1){30}}} \put(25,10){$B$ (Bull)}}

\put(240,0){\multiput(20,30)(50,0){2}{\multiput(0,0)(0,30){2}{\put(0,0){\circle*{6}}}
\put(0,0){\line(0,1){30}}}
\multiput(45,85)(0,30){2}{\put(0,0){\circle*{6}}}
\put(45,85){\line(0,1){30}} \put(20,60){\line(1,0){50}}
\put(20,60){\line(1,1){25}} \put(70,60){\line(-1,1){25}}
\put(25,10){$N$ (Net)}}

\put(320,0){\put(45,30){\circle*{6}} \put(20,55){\line(1,0){50}}
\put(45,30){\line(1,1){25}} \put(45,30){\line(-1,1){25}}
\multiput(20,55)(0,30){2}{\put(0,0){\circle*{6}}}
\multiput(70,55)(0,30){3}{\put(0,0){\circle*{6}}}
\put(20,55){\line(0,1){30}} \put(70,55){\line(0,1){60}}
\put(10,10){$W$ (Wounded)}}

\end{picture}

\caption{Graphs $Z_1,Z_2,B,N$ and $W$}\label{FiZ1Z2BNW}
\end{center}
\end{figure}

Forbidden subgraph conditions for hamiltonicity have been studied
since the early 1980s, but till 1991, Bedrossian \cite{Bedrossian} firstly gave a
characterization of all pairs of forbidden subgraphs for hamiltonian
properties of graphs. First we note that a connected
$P_3$-free graph is complete, and clearly is hamiltonian if it has
at least three vertices. In fact, it is not difficult to see that
$P_3$ is the only connected graph $H$ such that every 2-connected
$H$-free graph is hamiltonian. So the following result of Bedrossian
deals with pairs of forbidden subgraphs, excluding $P_3$.

\begin{theorem} [Bedrossian \cite{Bedrossian}]\label{ThBe}
Let $R,S$ be connected graphs of order at least 3 with $R,S\neq P_3$
and let $G$ be a 2-connected graph. Then $G$ being $R$-free and
$S$-free implies $G$ is hamiltonian if and only if (up to symmetry)
$R=K_{1,3}$ and $S=P_4,P_5,P_6,C_3,Z_1,Z_2,B,N$ or $W$.
\end{theorem}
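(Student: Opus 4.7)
The statement is an ``iff'', so the plan is to prove necessity and sufficiency separately and to organise each as a finite case analysis.

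For \emph{necessity}, I would assemble a short collection $\mathcal{F}$ of 2-connected non-hamiltonian graphs that collectively witness why every pair $(R,S)$ outside the stated list fails. Natural members of $\mathcal{F}$ are $K_{p,p+1}$ for $p\geq 2$ (bipartite, triangle-free, non-hamiltonian), the Petersen graph (3-regular, girth 5), and one or two small ad hoc graphs obtained by gluing complete blocks along a vertex cut. A first elimination step, using the $K_{p,p+1}$ family together with the other members of $\mathcal{F}$, forces one of $R,S$ to be $K_{1,3}$: every other connected graph of order at least $3$ distinct from $P_3$ either fails to be an induced subgraph of all sufficiently large $K_{p,p+1}$, or fails to rule out some non-hamiltonian claw-free graph in $\mathcal{F}$. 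Once $R=K_{1,3}$ is fixed, a second elimination step against the remaining members of $\mathcal{F}$ restricts $S$ to the small connected induced subgraphs common to all of them, precisely carving out the list $\{P_4,P_5,P_6,C_3,Z_1,Z_2,B,N,W\}$.

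For \emph{sufficiency}, I would verify each pair $(K_{1,3}, S)$ in turn. The case $S=C_3$ is immediate: a $(K_{1,3},C_3)$-free graph has maximum degree at most $2$ (any vertex of degree $\geq 3$ with independent neighbourhood induces $K_{1,3}$), so 2-connectedness forces a cycle. The case $S=P_4$ reduces to showing that a 2-connected $(K_{1,3},P_4)$-free graph is a complete multipartite graph with admissible part sizes, hence hamiltonian. The cases $S=P_5$ and $S=P_6$ are the deepest, being the theorems of Duffus--Gould--Jacobson and Broersma--Veldman respectively; both rely on closure arguments (Bondy--Chv\'atal or Ryj\'a\v{c}ek-style) combined with path-extension analysis along a longest cycle. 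The remaining cases $S\in\{Z_1,Z_2,B,N,W\}$ are handled by a similar closure-plus-local-structure approach: under $K_{1,3}$-freeness, each of these forbidden configurations pins down the local structure at a chord or bypass of a longest non-Hamilton cycle so tightly that one either extends the cycle or derives a direct contradiction.

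I expect the main obstacle to be the case $S=P_6$: paths of length $5$ are long enough to give the graph genuine flexibility, so the path-extension step cannot be finessed by a simple local argument and requires the delicate Broersma--Veldman analysis. The necessity direction, although combinatorially intricate, is essentially a careful finite enumeration once the witness family $\mathcal{F}$ is correctly chosen; the art there lies in picking $\mathcal{F}$ small enough to be tractable yet rich enough that its collective induced-subgraph set excludes exactly the pairs outside the list.
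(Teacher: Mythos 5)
The paper does not prove Theorem~\ref{ThBe} at all: it is quoted from Bedrossian's thesis as a known result, so there is no in-paper argument to compare yours against. Judged on its own terms, your text is a proof \emph{plan} rather than a proof, and while the overall architecture (a finite witness family for necessity, a case-by-case verification for sufficiency) is the standard and correct one, the necessity half as written contains a concrete error.

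For the second elimination step --- restricting $S$ once $R=K_{1,3}$ is fixed --- every witness you use must itself be claw-free: to refute a candidate $S$ outside the list you need a 2-connected non-hamiltonian graph that is simultaneously $K_{1,3}$-free and $S$-free. Neither of your named witnesses qualifies. In $K_{p,p+1}$ with $p\geq 2$, any vertex of the smaller side has three pairwise nonadjacent neighbours, so it contains an induced claw; the Petersen graph is $3$-regular with girth $5$, so every vertex together with its neighbourhood induces a claw. Hence the family $\mathcal{F}$ you describe cannot ``carve out'' the list $\{P_4,P_5,P_6,C_3,Z_1,Z_2,B,N,W\}$; what is needed are claw-free 2-connected non-hamiltonian graphs such as the members $P_{3,3,3}$, $P_{T,T,T}$, etc.\ of Brousek's family $\mathcal{P}$ from Section 2 (compare Theorem~\ref{ThBr}), possibly augmented by further line-graph constructions --- this is exactly the role the graphs of Figure~\ref{FiCounterexample} play in the paper's own `only if' argument for its main theorem. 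Separately, your sufficiency direction defers the substantive cases ($P_5$, $P_6$, $Z_2$, $N$, $W$) entirely to the cited literature rather than proving them, which is legitimate bookkeeping but means the proposal supplies no new argument for the hard direction; the attribution of the $P_5$ case to Duffus--Gould--Jacobson (whose theorem concerns $N$-free graphs) should also be checked.
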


The above forbidden subgraph conditions for hamiltonicity are
sometimes referred to as \emph{structural conditions}. There is
another type of conditions with respect to hamiltonian properties of
graphs, {so-called} \emph{numerical conditions}, of which degree
conditions may be the most well-known.

Let $G$ be a graph. For a vertex $v\in V(G)$ and a subgraph $H$ of
$G$, we use $N_H(v)$ to denote the set, and $d_H(v)$ the
number, of neighbors of $v$ in $H$, respectively. We call $d_H(v)$
the \emph{degree} of $v$ in $H$. The \emph{distance} between two vertices $x,y\in
V(H)$ in $H$, denoted by $d_H(x,y)$, is the length of a shortest
path between $x$ and $y$ in $H$. When no confusion occurs, we
will denote $N_G(v)$, $d_G(v)$ and $d_G(x,y)$ by $N(v)$, $d(v)$ and
$d(x,y)$, respectively.

The followings are two {well-known} results concerning the degree
conditions for hamiltonicity of graphs.

\begin{theorem} [Dirac \cite{Dirac}]\label{ThDi}
Let $G$ be a graph on $n\geq 3$ vertices. If $d(v)\geq n/2$ for
every $v\in V(G)$, then $G$ is hamiltonian.
\end{theorem}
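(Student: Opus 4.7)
The plan is to apply the classical P\'osa-style longest-path argument, which breaks into four short stages. The first stage is to observe that the hypothesis $d(v)\ge n/2$ forces $G$ to be connected: each connected component contains at least $n/2+1$ vertices, so two components cannot coexist inside a vertex set of size $n$.

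The heart of the argument is the second stage. I would fix a longest path $P=v_1v_2\cdots v_k$; by maximality, every neighbor of $v_1$ and of $v_k$ lies on $P$. Setting
\[
S=\{i:1\le i\le k-1,\ v_1v_{i+1}\in E(G)\},\qquad T=\{i:1\le i\le k-1,\ v_iv_k\in E(G)\},
\]
the degree hypothesis yields $|S|+|T|=d(v_1)+d(v_k)\ge n$, while $S\cup T\subseteq\{1,\ldots,k-1\}$ has size at most $k-1<n$; hence $S\cap T\neq\emptyset$. For any $i\in S\cap T$, the edges $v_1v_{i+1}$ and $v_iv_k$ let me reassemble $P$ into the cycle
\[
C=v_1v_2\cdots v_iv_kv_{k-1}\cdots v_{i+1}v_1
\]
of length $k$ on the same vertex set $V(P)$.

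The final stage upgrades $C$ to a Hamilton cycle. If $k<n$, then, since $G$ is connected (stage one), some vertex $u\notin V(C)$ has a neighbor $v_j\in V(C)$; cutting $C$ at an edge incident to $v_j$ and appending $u$ produces a path on $k+1$ vertices, contradicting the maximality of $P$. Hence $k=n$, so $C$ is a Hamilton cycle.

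I do not expect a genuine obstacle here---this is a textbook result. The only conceptually loaded step is the pigeonhole/rotation trick in the second stage, which is the seed from which Ore's theorem and the Bondy--Chv\'atal closure theory (both central to the rest of the paper) grow; everything else is bookkeeping with the maximality of $P$ and the connectivity of $G$.
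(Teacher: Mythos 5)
The paper does not prove this statement: it is quoted as a classical result and attributed to Dirac's 1952 paper, so there is no in-paper argument to compare against. Your proof is the standard longest-path argument and it is correct and complete: connectivity follows because every component must contain at least $n/2+1$ vertices; the sets $S$ and $T$ satisfy $|S|=d(v_1)$ and $|T|=d(v_k)$ because maximality of $P$ confines all neighbours of the endpoints to $V(P)$, so $|S|+|T|\ge n>k-1\ge|S\cup T|$ forces a common index $i$ and hence the crossing edges $v_1v_{i+1}$, $v_iv_k$ that close $V(P)$ into a cycle; and the final step correctly uses connectivity to extend the path if the cycle were not spanning, contradicting maximality. The only cosmetic slip is the phrase ``path on $k+1$ vertices, contradicting the maximality of $P$'' --- strictly you obtain a path with more vertices than $P$, which is exactly the contradiction, so nothing is wrong. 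This is precisely the argument one would expect, and it is also the seed of the Ore-type and closure machinery that the paper actually uses, as you note.
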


\begin{theorem} [Ore \cite{Ore}]\label{ThOr}
Let $G$ be a graph on $n\geq 3$ vertices. If $d(u)+d(v)\geq n$ for
every pair of nonadjacent vertices $u,v\in V(G)$, then $G$ is
hamiltonian.
\end{theorem}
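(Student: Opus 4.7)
The plan is the classical contradiction argument via an edge-maximal non-hamiltonian graph. Suppose, for contradiction, that $G$ satisfies the Ore degree condition but is not hamiltonian. Among all spanning supergraphs of $G$ that are non-hamiltonian, choose one, $G^{*}$, with the greatest number of edges. Since adding edges only raises vertex degrees, $G^{*}$ still satisfies $d_{G^{*}}(u)+d_{G^{*}}(v)\geq n$ for every non-adjacent pair $u,v$; and since $K_n$ is hamiltonian for $n\geq 3$, $G^{*}$ is a proper subgraph of $K_n$, so some non-adjacent pair $u,v$ exists in $G^{*}$. By maximality $G^{*}+uv$ is hamiltonian, and every Hamilton cycle in $G^{*}+uv$ must traverse the new edge $uv$; deleting $uv$ from such a cycle produces a Hamilton path $v_1v_2\cdots v_n$ in $G^{*}$ with $v_1=u$ and $v_n=v$.

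The key combinatorial step is an index-counting trick. I would set
\[ S=\{i\in\{1,\ldots,n-1\}:v_1v_{i+1}\in E(G^{*})\},\qquad T=\{i\in\{1,\ldots,n-1\}:v_iv_n\in E(G^{*})\}, \]
so that $|S|=d_{G^{*}}(v_1)$ and $|T|=d_{G^{*}}(v_n)$, with $S,T\subseteq\{1,\ldots,n-1\}$. Since $v_1v_n\notin E(G^{*})$ one checks $n-1\notin S$ and $1\notin T$. The Ore hypothesis applied to $v_1,v_n$ then yields
\[ |S|+|T|\ \geq\ n\ >\ n-1\ =\ |\{1,\ldots,n-1\}|, \]
so $S\cap T\neq\emptyset$ by inclusion--exclusion. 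For any $i\in S\cap T$ the edges $v_1v_{i+1}$ and $v_iv_n$ both lie in $G^{*}$, and splicing them into the Hamilton path produces the cycle
\[ v_1v_2\cdots v_iv_nv_{n-1}\cdots v_{i+1}v_1, \]
which is a Hamilton cycle in $G^{*}$, contradicting the choice of $G^{*}$.

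There is no substantial obstacle in this argument: the whole proof is a short chain of three ideas (edge-maximality, a forced Hamilton path between any two non-adjacent vertices, and the index-counting). The only item deserving explicit care is the verification that the displayed cycle really is Hamilton for every admissible $i$; this is immediate once one notes $2\leq i\leq n-2$ (using $1\notin T$ and $n-1\notin S$), so both segments $v_1\cdots v_i$ and $v_n\cdots v_{i+1}$ are non-trivial and together cover every vertex exactly once. A fully equivalent repackaging would replace the edge-maximality step by invoking the Bondy--Chv\'atal $n$-closure, but this is cosmetic and not needed for a direct proof.
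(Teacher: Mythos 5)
Your proof is correct: the edge-maximality reduction, the forced Hamilton path between the nonadjacent pair, and the index-counting argument with $S$ and $T$ are all carried out accurately (including the check that $1\notin T$ and $n-1\notin S$, which guarantees the spliced cycle uses only edges of $G^{*}$). Note, however, that the paper does not prove this statement at all --- it quotes Ore's theorem as a known result with a citation --- so there is no in-paper argument to compare against; what you have written is the standard classical proof, and it is sound as given.
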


It is natural to relax the forbidden subgraph conditions to ones {in which some of the forbidden
subgraphs above} are allowed, but some degree conditions are imposed on the
subgraphs. Broersma et al.
\cite{Broersma_Ryjacek_Schiermeyer} introduced the class of 1-heavy
(2-heavy) graphs by restricting Dirac's condition to induced claws
of a graph. Instead of Broersma et al.'s restriction, \v{C}ada
\cite{Cada} put Ore's condition to induced claws of a graph, and
call it an \emph{o}-heavy graph. {(In this paper, we will call it a
claw-\emph{o}-heavy graph for convenience.)} Li et al.
\cite{Li_Ryjacek_Wang_Zhang} extended \v{C}ada's concept of
claw-\emph{o}-heavy graphs to a more general one.

Let $G$ be a graph on $n$ vertices. Following
\cite{Li_Ryjacek_Wang_Zhang}, for a given graph $H$, $G$ is {called}
$H$-\emph{o}-\emph{heavy} (the authors used the notation `$H$-heavy' in
\cite{Li_Ryjacek_Wang_Zhang}), if every induced copy of $H$ in $G$
has two nonadjacent vertices with degree sum in $G$ at least $n$.
Note that an $H$-free graph is trivially $H$-\emph{o}-heavy, and if
$H_1$ is an induced subgraph of $H_2$, then an $H_1$-\emph{o}-heavy
graph is also $H_2$-\emph{o}-heavy. Following \cite{Ning_Zhang}, we
say that a graph $G$ is $H$-\emph{f}-\emph{heavy} if for every induced copy
$G'$ of $H$ in $G$, and every two vertices $u,v\in V(G')$ with
$d_{G'}(u,v)=2$, there holds $\max\{d(u),d(v)\}\geq |V(G)|/2$. Note
that every claw-\emph{f}-heavy graph is also claw-\emph{o}-heavy.

Li et al. \cite{Li_Ryjacek_Wang_Zhang} completely characterized pairs
of Ore-type heavy subgraphs for a 2-connected graph to be
hamiltonian, which extends Theorem \ref{ThBe}. The main result in
\cite{Li_Ryjacek_Wang_Zhang} is given as follows.

\begin{theorem}[Li et al. \cite{Li_Ryjacek_Wang_Zhang}]\label{ThLiRyWaZh}
Let $R,S$ be connected graphs of order at least 3 with $R,S\neq P_3$
and let $G$ be a 2-connected graph. Then $G$ being $R$-o-heavy and
$S$-o-heavy implies $G$ is hamiltonian if and only if (up to
symmetry) $R=K_{1,3}$ and $S=P_4,P_5,C_3,Z_1,Z_2,B,N$ or $W$.
\end{theorem}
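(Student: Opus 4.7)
This theorem parallels Theorem~\ref{ThBe} with each forbidden subgraph relaxed to an Ore-type heavy subgraph, and the admissible list is Bedrossian's list with $P_6$ removed. The proof splits naturally into a necessity direction and a sufficiency direction.

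For \emph{necessity}, suppose $(R,S)$ lies outside the claimed list. Since every $H$-free graph is trivially $H$-o-heavy, every counterexample used to show the sharpness of Bedrossian's theorem remains valid here, forcing $R=K_{1,3}$ and $S\in\{P_4,P_5,P_6,C_3,Z_1,Z_2,B,N,W\}$. The only genuinely new task is ruling out $S=P_6$: I would exhibit the family of 2-connected claw-o-heavy, $P_6$-o-heavy, non-hamiltonian graphs announced in the abstract (from \cite{Li_Ryjacek_Wang_Zhang}), which shows that strengthening ``$P_6$-free'' to ``$P_6$-o-heavy'' is still not enough to pair with $K_{1,3}$.

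For \emph{sufficiency}, I would first reduce the eight cases to just two via the monotonicity remark made in the excerpt: if $H_1$ is an induced subgraph of $H_2$, then every $H_1$-o-heavy graph is also $H_2$-o-heavy. Checking the induced-subgraph relations inside $\{P_4,P_5,C_3,Z_1,Z_2,B,N,W\}$, every element embeds into $N$ (namely $P_4,C_3,Z_1,B$) or into $W$ (namely $P_4,P_5,C_3,Z_1,Z_2,B$), and the only maximal elements of the list are $N$ and $W$ themselves, with neither containing the other. Consequently, it suffices to prove the theorem for $S=N$ and for $S=W$.

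For each of these two cases, my plan is to follow the closure-theoretic strategy pioneered by Ryj\'a\v{c}ek for claw-free graphs and adapted to heavy settings by \v{C}ada: iteratively add edges between carefully chosen nonadjacent vertices of high combined degree so as to preserve the vertex set, 2-connectedness, and hamiltonicity status, producing a claw-free (or nearly claw-free) graph $\cl(G)$ on which a known structural hamiltonicity theorem can be invoked (either Bedrossian's Theorem~\ref{ThBe} applied to the pair $(K_{1,3},S)$, or a refinement handling claw-free graphs with $S$-o-heaviness). The hardest step will be verifying that the closure preserves $S$-o-heaviness at each stage: adding an edge can destroy some induced copies of $S$ but can also create new ones, and the Ore-type degree bound must continue to hold on every surviving and every newly created copy. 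This analysis is structurally different for $S=N$ (where the critical configurations are triangles carrying several independent pendant neighbors) and for $S=W$ (where a pendant path of length two attached to a triangle must be controlled), so each case will almost certainly require its own closure rule or a tailored verification. A final cycle-extension argument, using the resulting Ore-type bounds to rule out vertices missed by a longest cycle in $\cl(G)$, completes the deduction.
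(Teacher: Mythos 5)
This statement is quoted from \cite{Li_Ryjacek_Wang_Zhang}; the paper gives no proof of it, so there is no internal argument to compare yours against. Judged on its own terms, your necessity direction is sound: since every $H$-free graph is $H$-\emph{o}-heavy, any pair admissible here must already be admissible in Theorem~\ref{ThBe}, and the exclusion of $S=P_6$ rests on exhibiting the family of 2-connected, claw-\emph{o}-heavy, $P_6$-\emph{o}-heavy, non-hamiltonian graphs from \cite{Li_Ryjacek_Wang_Zhang} that the introduction mentions. Your reduction of sufficiency to the two maximal cases $S=N$ and $S=W$ is also correct: $P_4,C_3,Z_1,B$ are induced subgraphs of $N$, while $P_4,P_5,C_3,Z_1,Z_2,B$ are induced subgraphs of $W$, and neither of $N,W$ is induced in the other, so the monotonicity of \emph{o}-heaviness under induced subgraphs finishes the reduction.

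The genuine gap is that the entire technical core of sufficiency --- proving that every 2-connected claw-\emph{o}-heavy and $N$-\emph{o}-heavy (resp.\ $W$-\emph{o}-heavy) graph is hamiltonian --- is left as a plan, and the plan points in a direction that is harder than necessary. You propose to show that the closure operation preserves $S$-\emph{o}-heaviness and then invoke a structural theorem on $\cl(G)$; but local completion can create new induced copies of $S$ whose nonadjacent pairs need not be heavy, and you offer no mechanism for controlling them --- you yourself flag this step as unresolved, and it is exactly where such an argument tends to break. The workable route, and the one this paper itself uses in Section~3 for its own main theorem, runs in the opposite direction: pass to $\cl(G)$, which by Theorem~\ref{ThCa} is claw-free and non-hamiltonian whenever $G$ is; apply Brousek's Theorem~\ref{ThBr} to obtain an induced subgraph $H\in\mathcal{P}$ of $\cl(G)$; and then use the region structure of Lemmas~\ref{LeClosed} and~\ref{LeRegion} (dissociated vertices form light pairs; on a shortest path through interior vertices, any two vertices at distance at least $3$ form a light pair) to locate inside the original graph $G$ an induced copy of $S$ in which every relevant pair is light, contradicting $S$-\emph{o}-heaviness. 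Until the case analysis for $N$ and for $W$ is actually carried out by one of these methods, what you have is an outline rather than a proof.
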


It is easy to see that $P_6$ is the only forbidden subgraph $S$
appearing in Theorem \ref{ThBe} but missing here. Li et al.
\cite{Li_Ryjacek_Wang_Zhang} constructed a class of {2-connected  graphs which are claw-\emph{o}-heavy and $P_6$-\emph{o}-heavy but not
hamiltonian.}

In fact, earlier than Bedrossian \cite{Beineke}, Broersma and
Veldman \cite{Broersma_Veldman} proved that every 2-connected
claw-free and $P_6$-free graph is hamiltonian. Chen et al.
\cite{Chen_Zhang_Qiao} furthermore extended Broersma and Veldman's result as follows.

\begin{theorem}[Chen et al. \cite{Chen_Zhang_Qiao}]\label{ThChWeZh}
Every 2-connected claw-o-heavy and $P_6$-free graph is hamiltonian.
\end{theorem}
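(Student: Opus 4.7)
My plan is to reduce Theorem \ref{ThChWeZh} to the Broersma-Veldman result (every 2-connected claw-free and $P_6$-free graph is hamiltonian) through a Bondy-Chv\'atal-type closure operation tailored to claw-o-heavy graphs. Let $G$ be a 2-connected, claw-o-heavy, $P_6$-free graph on $n$ vertices. Call a non-edge $uv$ \emph{eligible} if $u,v$ appear as two non-adjacent leaves of some induced claw of the current graph and $d(u)+d(v)\geq n$. By the claw-o-heavy hypothesis, every induced claw of $G$ supplies at least one eligible non-edge. I would iteratively add eligible non-edges one at a time, each step preserving hamiltonicity by the Bondy-Chv\'atal closure theorem. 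Provided that both claw-o-heaviness and $P_6$-freeness are preserved throughout, the procedure terminates in a graph $G^*$ with no induced claw, hence a 2-connected, claw-free, $P_6$-free graph to which Broersma-Veldman applies: $G^*$, and therefore $G$, is hamiltonian.

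Preservation of claw-o-heaviness requires a short verification that whenever the added edge $uv$ lies inside a new induced claw of the enlarged graph, that claw still admits a pair of non-adjacent vertices with degree sum at least $n$; since vertex degrees only increase during the process, this can be handled by tracing the new claw back to a witnessing induced claw of the original $G$. The essential obstacle is preservation of $P_6$-freeness. Suppose adding the edge $uv$ -- where $u,v$ are leaves of an induced claw centered at $x$ with third leaf $y$ -- creates an induced $P_6$, say $P=w_1w_2w_3w_4w_5w_6$, using the new edge as $w_iw_{i+1}$ for some $i\in\{1,\dots,5\}$. Then $x\notin V(P)$ (because $x$ is adjacent to both endpoints of an internal edge of the induced path), and the induced-claw structure forces $y$ to be non-adjacent to each of $x,u,v$.

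The heart of the argument is thus a case analysis on $i\in\{1,\dots,5\}$ showing that in each case one can construct an induced $P_6$ already present in $G$, contradicting the hypothesis. The typical move is to substitute $x$ or $y$ for an endpoint of $P$, or to prepend $y$ to an end of $P$ adjacent to $\{u,v\}$; for instance, when $i=1$ the sequence $y,u,w_3,w_4,w_5,w_6$ often yields the desired induced $P_6$ in $G$, and analogous constructions work for the other positions. The delicate subcases are those in which $x$ or $y$ has additional neighbors among $w_1,\dots,w_6$ that block these direct substitutions; here I expect the Ore-type bound $d(u)+d(v)\geq n$ to be invoked, through a counting argument locating a common neighbor of $\{u,v\}$ off the path that restores an induced $P_6$ in $G$. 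Carrying out this case analysis cleanly is the main technical obstacle, and once it is complete the reduction to Broersma-Veldman closes the proof.
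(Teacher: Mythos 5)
The paper does not actually prove Theorem \ref{ThChWeZh}: it is quoted from Chen et al.\ \cite{Chen_Zhang_Qiao}, and inside this paper it also follows as the special case $\gamma=\varepsilon$ of the `if' part of Theorem \ref{ThIffi}, whose proof in Section 3 shows that every $2$-connected claw-\emph{o}-heavy non-hamiltonian graph contains an induced $P_6$ --- by passing to \v{C}ada's closure $\cl(G)$ (Theorem \ref{ThCa}), applying Brousek's Theorem \ref{ThBr} to find an induced member of $\mathcal{P}$ in $\cl(G)$, and pulling an induced $P_6$ back into $G$ through the region machinery of Lemmas \ref{LeClosed} and \ref{LeRegion}. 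Your route is different in kind: you want to reach a claw-free graph by adding heavy non-edges of induced claws one at a time and then invoke Broersma--Veldman.

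The difficulty is that the two preservation claims your reduction rests on are exactly the hard content of the theorem, and neither is established. First, preservation of claw-\emph{o}-heaviness: a new induced claw of $G+uv$ need not ``trace back'' to any induced claw of $G$. If $uv$ becomes a centre--leaf edge of a new claw, say with centre $u$ and leaves $v,a,b$, the set $\{u,a,b\}$ need not extend to an induced claw of $G$ at all (the neighbourhood of $u$ in $G$ may be covered by two cliques), so there is no witnessing claw of $G$ whose heavy pair you can reuse; without this the iteration can stall at a graph that still contains a light claw, and you never reach a claw-free graph. This is precisely why \v{C}ada's closure is defined by local completion at \emph{o}-eligible vertices --- adding all missing edges of $G[N(x)]$ at once, with the join-vertex condition --- rather than by inserting heavy claw-edges one by one. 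Second, preservation of $P_6$-freeness: you correctly identify the case analysis required when the new edge lies on a newly created induced $P_6$, but you only resolve the easy subcases (where $x$ has no further neighbours on the path, so one reroutes through $x$) and defer the delicate ones to an unspecified counting argument involving $d(u)+d(v)\geq n$; note also that the third leaf $y$ \emph{is} adjacent to the centre $x$, contrary to what you assert. With both preservation steps open, the proposal is a plan rather than a proof; the argument the paper relies on avoids both issues by performing the entire closure in one stroke via Theorem \ref{ThCa} and then locating the induced $P_6$ directly in $G$ using Lemma \ref{LeRegion}(3).
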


So one may ask the question: Which degree conditions can be used to
restrict to all induced copies of $P_6$ in a 2-connected
claw-\emph{o}-heavy graph to make it hamiltonian?

A related result is as follows.

\begin{theorem}[Ning and Zhang \cite{Ning_Zhang}]\label{ThNiZh}
Every 2-connected claw-o-heavy and $P_6$-f-heavy graph is
hamiltonian.
\end{theorem}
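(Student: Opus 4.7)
The plan is to reduce Theorem~\ref{ThNiZh} to Theorem~\ref{ThChWeZh} via closure theory. Specifically, I would introduce a closure operation $G\mapsto \cl(G)$ on the class of 2-connected claw-o-heavy graphs with three properties:
(a) $G$ is hamiltonian if and only if $\cl(G)$ is;
(b) $\cl(G)$ remains 2-connected and claw-o-heavy;
(c) if $G$ is in addition $P_6$-f-heavy, then $\cl(G)$ is $P_6$-free.
Given these, Theorem~\ref{ThChWeZh} applied to $\cl(G)$ produces a Hamilton cycle of $\cl(G)$, which by (a) lifts to a Hamilton cycle of $G$.

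For the closure itself I would use the natural claw-o-heavy closure: iteratively add any non-edge $uv$ with $d(u)+d(v)\geq n$ that appears as the nonadjacent pair of an induced $K_{1,3}$ in the current graph. This is the operation implicit in~\cite{Cada} and~\cite{Li_Ryjacek_Wang_Zhang}. Property (a) follows from a Bondy--Chv\'atal-style swap along a hypothetical Hamilton cycle of $\cl(G)$; property (b) is a routine check that inserting such an edge neither lowers connectivity nor spoils the claw-o-heavy condition, since any new induced claw uses the fresh edge and can be analysed via the original induced claw that justified its insertion.

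The crux is (c). Assume for contradiction that $P=v_1v_2\cdots v_6$ is an induced $P_6$ in $\cl(G)$. Its four distance-2 pairs $\{v_1,v_3\},\{v_2,v_4\},\{v_3,v_5\},\{v_4,v_6\}$ are nonadjacent in $\cl(G)$, hence nonadjacent in $G$. If $P$ is already induced in $G$, the $P_6$-f-heavy hypothesis supplies a vertex of degree $\geq n/2$ in each such pair; by exhibiting an induced claw of $G$ centered at one of the internal $v_j$ (using two consecutive path-neighbors together with a neighbor off $P$, whose existence is forced by 2-connectedness), one locates two nonadjacent heavy vertices with degree sum $\geq n$. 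This pair would have been joined by the closure, contradicting that $P$ is still induced in $\cl(G)$. If $P$ is not induced in $G$, I would examine the first closure edge lying inside $P\cup N(P)$ whose deletion restores an induced $P_6$ of $G$, and run the same argument on that earlier induced $P_6$.

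The main obstacle will be the lifting in (c): a single closure edge can simultaneously destroy many induced claws and induced $P_6$s in $G$, so one needs a careful case analysis on the first closure step after which $P$ appears as an induced $P_6$, together with which $v_j$ can host the required induced claw in $G$. The claw-o-heavy hypothesis yields a heavy nonadjacent pair in every induced claw, and the $P_6$-f-heavy hypothesis yields a vertex of degree $\geq n/2$ in every distance-2 pair of the lifted $P_6$; the technical core is to pair these two sources of heaviness to produce an Ore-type sum $\geq n$ at a nonadjacent pair that the closure cannot have ignored, thereby forcing the contradiction.
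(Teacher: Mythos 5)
First, note that the paper does not prove Theorem~\ref{ThNiZh} at all: it is quoted from \cite{Ning_Zhang}, and within this paper it is obtained as a corollary of Theorem~\ref{ThopHeavy} (equivalently, of the `if' part of Theorem~\ref{ThIffi} for $\gamma_1$), via the easy observation that a $P_6$-\emph{f}-heavy graph is $P_6$-$\gamma_1$-heavy. That proof works in the contrapositive: assuming $G$ is non-hamiltonian, it passes to $\cl(G)$, invokes Brousek's Theorem~\ref{ThBr} to find an induced $H\in\mathcal{P}$, and then uses the region machinery (Lemmas~\ref{LeClosed} and \ref{LeRegion}) to exhibit a bad induced $P_6$ \emph{in $G$ itself}. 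Your route --- make the closure $P_6$-free and quote Theorem~\ref{ThChWeZh} --- is genuinely different, and unfortunately it has a fatal gap.

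The gap is your property (c): it is not merely unproven but false for the closure you describe. The $P_6$-\emph{f}-heavy condition on an induced path $v_1v_2\cdots v_6$ only forces $\max\{d(v_1),d(v_3)\}$, $\max\{d(v_2),d(v_4)\}$, $\max\{d(v_3),d(v_5)\}$, $\max\{d(v_4),d(v_6)\}\geq n/2$, and all four constraints are satisfied when $v_3$ and $v_4$ are heavy and $v_1,v_2,v_5,v_6$ are light. In that situation the only heavy pair on the path is the \emph{adjacent} pair $\{v_3,v_4\}$, so no nonadjacent heavy pair is available for your closure to join, and the path can survive intact in $\cl(G)$; since hamiltonian graphs (and their closures) routinely contain induced $P_6$s, you cannot expect $P_6$-freeness of the closure. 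Two further steps are also unjustified: (i) property (b) fails for the naive ``join a heavy nonadjacent pair of a claw'' operation, because a new claw $u;v,a,b$ created by the inserted edge $uv$ needs a heavy pair inside $\{v,a,b\}$, and the heaviness of $\{u,v\}$ gives no such pair (this is precisely why \v{C}ada's closure is defined by local completion at \emph{o}-eligible vertices rather than edge by edge); and (ii) your claw at an internal $v_j$ need not exist, since 2-connectedness does not supply a neighbor of $v_j$ off $P$ that is nonadjacent to both $v_{j-1}$ and $v_{j+1}$. The intended reduction to Theorem~\ref{ThChWeZh} therefore does not go through; the heaviness supplied by the \emph{f}-heavy hypothesis must instead be exploited on a $P_6$ located in the original graph, as in the proof of Theorem~\ref{ThIffi}.
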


One may further ask: Can we still put Ore's condition (or Dirac's condition) to induced $P_6$s
of a graph but with some additional restrictions to guarantee that it is hamiltonian?

Our answers are the following two results.
Note that the first theorem weakens the condition of Theorem \ref{ThNiZh}.

\begin{theorem}\label{ThopHeavy}
Let $G$ be a 2-connected claw-o-heavy graph of order at least $n$.
If for every induced copy of $P_6: v_1v_2\cdots v_6$ in $G$,
$d(v_i)+d(v_j)\geq n$ for some $i\in\{1,2,3\}$ and $j\in\{4,5,6\}$,
then $G$ is hamiltonian.
\end{theorem}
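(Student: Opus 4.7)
The plan is to combine a closure argument with Theorem \ref{ThChWeZh}. My goal is to build a supergraph $G^{*}\supseteq G$ that is hamiltonian iff $G$ is, is 2-connected and claw-o-heavy, and contains no induced $P_6$; Theorem \ref{ThChWeZh} applied to $G^{*}$ then finishes the proof. I would iteratively add an edge $v_iv_j$ whenever the current graph contains an induced $P_6$: $v_1v_2\cdots v_6$ together with a nonadjacent pair $\{v_i,v_j\}$ satisfying $i\in\{1,2,3\}$, $j\in\{4,5,6\}$, $(i,j)\neq(3,4)$, and $d(v_i)+d(v_j)\geq n$. Preservation of hamiltonicity follows from a Bondy--Chv\'atal style argument: if a Hamilton cycle $C$ in the augmented graph uses the new edge $v_iv_j$, then the degree-sum condition along $C$ forces a ``crossing'' between neighbors of $v_i$ and $v_j$, producing a Hamilton cycle that avoids $v_iv_j$. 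The properties of being claw-o-heavy and 2-connected both persist under edge additions.

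The crucial step is to verify that $G^{*}$ is $P_6$-free. Suppose to the contrary $G^{*}$ contains an induced $P_6$: $u_1u_2\cdots u_6$. Since $G\subseteq G^{*}$, the subgraph of $G$ induced on $\{u_1,\ldots,u_6\}$ is a spanning subgraph of this $P_6$. I would first handle the case where this $P_6$ is already induced in $G$: the theorem's hypothesis then supplies a pair $(u_i,u_j)$ with $d_G(u_i)+d_G(u_j)\geq n$, and if $(i,j)\neq(3,4)$ the closure rule would already have joined $u_iu_j$, contradicting $u_1\cdots u_6$ being induced in $G^{*}$. The remaining cases---either some edge $u_ku_{k+1}$ was created during closure, or the only high-degree pair is the adjacent pair $(u_3,u_4)$---would be handled by invoking the claw-o-heavy condition at induced claws centered at $u_3$ or $u_4$, whose existence is forced by the high degree sum together with the $P_6$ structure limiting the common neighbors of $u_2$ and $u_5$; this propagates further forced edge additions that destroy the induced $P_6$.

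The main obstacle is the case analysis just described: how induced $P_6$s can be created or destroyed across closure steps, and in particular the treatment of the adjacent pair $(u_3,u_4)$ where the closure rule does not directly apply. A viable fallback---trading closure elegance for technicality---is a direct longest-cycle argument: take a longest cycle $C$ in $G$, use 2-connectivity to find a vertex $w\notin C$ with two neighbors on $C$, and exploit both the claw-o-heavy condition and the restricted Ore condition on judiciously chosen induced $P_6$s containing $w$ and a segment of $C$ to extend $C$ or derive a structural contradiction. Either route reduces the result to Theorem \ref{ThChWeZh} once all induced $P_6$s have been neutralized.
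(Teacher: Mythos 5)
There is a genuine gap, and it sits exactly where the difficulty of the theorem lives. First, your assertion that claw-o-heaviness persists under the edge additions is false in general: adding an edge $uv$ with $d(u)+d(v)\geq n$ can create a \emph{new} induced claw with center $u$ and leaves $v,a,b$ (where $ua,ub\in E(G)$ and $v,a,b$ are pairwise nonadjacent in $G$), and the o-heavy condition for this claw requires a heavy pair among the three \emph{leaves}, which is not implied by $d(u)+d(v)\geq n$ since $u$ is the center. This is precisely why \v{C}ada's closure for claw-o-heavy graphs is built from o-eligible vertices and join vertices rather than from arbitrary Ore pairs. Second, and more seriously, the hypothesis permits the witnessing pair to be $(i,j)=(3,4)$, i.e., the \emph{adjacent} pair $v_3v_4$ of the path (note that $34\in E(\gamma_1)$), in which case the induced $P_6$ may carry no nonadjacent heavy pair at all and your closure rule adds nothing. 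Your plan for this case --- that a heavy vertex among $u_3,u_4$ forces an induced claw whose o-heavy condition ``propagates further forced edge additions'' --- is not an argument: a heavy vertex need not be the center of any induced claw, and even when it is, the heavy pair of leaves it yields may lie off the path and force nothing. A further unaddressed point is that edge additions can \emph{create} induced $P_6$s (six vertices inducing $P_6$ minus one edge become an induced $P_6$ after one addition), and for such paths the hypothesis, which speaks only about induced $P_6$s of $G$ itself, supplies no degree information whatsoever.

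For comparison, the paper does not attempt any reduction to the $P_6$-free case. It proves the contrapositive: if $G$ is non-hamiltonian, it passes to \v{C}ada's closure $\cl(G)$, which is claw-free and non-hamiltonian, applies Brousek's theorem to locate an induced subgraph from $\mathcal{P}$, and then uses the region structure of $\cl(G)$ (Lemmas \ref{LeClosed} and \ref{LeRegion}) to pull back to $G$ an induced $P_6$ in which every pair $(v_i,v_j)$ with $i\in\{1,2,3\}$, $j\in\{4,5,6\}$ is light --- including the adjacent pair $(v_3,v_4)$, which is handled there by the fact that dissociated vertices are never heavy pairs. Your fallback longest-cycle suggestion is only a gesture and does not close these gaps.
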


\begin{theorem}
Let $G$ be a 2-connected claw-o-heavy graph of order at least $n$.
If for every induced copy of $P_6: v_1v_2\cdots v_6$ in $G$,
$\max\{d(v_1),d(v_6)\}\geq n/2$, then $G$ is hamiltonian.
\end{theorem}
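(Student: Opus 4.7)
The plan is to reduce the theorem to Theorem \ref{ThopHeavy}. Specifically, I aim to show that, for every induced copy of $P_6$: $v_1v_2\cdots v_6$ in $G$, the Dirac-type hypothesis $\max\{d(v_1),d(v_6)\}\ge n/2$ implies the Ore-type hypothesis of Theorem \ref{ThopHeavy}, namely $d(v_i)+d(v_j)\ge n$ for some $i\in\{1,2,3\}$ and $j\in\{4,5,6\}$; the hamiltonicity of $G$ would then follow directly from Theorem \ref{ThopHeavy}.

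Fix an induced $P_6$: $v_1v_2v_3v_4v_5v_6$. By the hypothesis, at least one of $v_1,v_6$ is heavy; by symmetry we may assume $d(v_1)\ge n/2$. If some vertex among $v_4,v_5,v_6$ is also heavy, then pairing with $v_1$ gives the required Ore inequality, with $i=1$. So the real work is in the case $d(v_4),d(v_5),d(v_6)<n/2$. In this case I would try to extend the path at the $v_6$-end: since $G$ is 2-connected and $d(v_6)<n/2$, the vertex $v_6$ has neighbors outside $\{v_1,\dots,v_5\}$; using the claw-o-heavy property at $v_5$ (applied to induced claws that may appear when neighbors of $v_6$ attach back into the path), I would locate some $w\notin\{v_1,\dots,v_5\}$ adjacent to $v_6$ but not to $v_2,v_3,v_4,v_5$. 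Then $v_2v_3v_4v_5v_6w$ is again an induced $P_6$, and the current hypothesis applied to it forces $\max\{d(v_2),d(w)\}\ge n/2$; combining with $d(v_1)\ge n/2$ either yields an Ore pair of the form $(v_2,v_j)$ with $j\in\{4,5,6\}$, or allows us to shift one endpoint of the reasoning and exploit the symmetric extension at the $v_1$-end, eventually producing the required Ore-type inequality.

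The main obstacle is the configuration where no such clean extension $w$ exists at $v_6$: every external neighbor of $v_6$ has an additional edge into $\{v_2,v_3,v_4,v_5\}$. Such a local structure creates many induced claws centered near $v_5$ and $v_6$, and one must carefully track which non-adjacent pairs of claw-neighbors yield the Ore sum via claw-o-heaviness, in order to close the argument without circularity. A potentially cleaner route, in line with the closure-theory theme of the paper, is to pass first to a Ryj\'a\v{c}ek--\v{C}ada-type closure $\cl(G)$ of $G$ for the class of claw-o-heavy graphs, which is claw-free, 2-connected, and hamiltonian if and only if $G$ is, and then either invoke Broersma--Veldman's theorem (the claw-free/$P_6$-free case underlying Theorem \ref{ThBe}) or Theorem \ref{ThChWeZh} on $\cl(G)$; verifying that the closure preserves the endpoint-heaviness hypothesis on induced $P_6$s is the delicate step in this approach.
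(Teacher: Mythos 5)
Your reduction to Theorem \ref{ThopHeavy} cannot work on a per-$P_6$ basis, and the global version you sketch is left open at exactly the point where the difficulty lies. In the language of the paper, your hypothesis is $P_6$-$\gamma$-heaviness for the graph $\gamma$ on $\mathfrak{I}$ with $E(\gamma)=\{11,66\}$ (two loops), whereas Theorem \ref{ThopHeavy} is the case $\gamma_1$, which has no loops; the former does not imply the latter for a single induced $P_6$ (take $d(v_1)=n/2$ and $v_2,\dots,v_6$ all light). The paper instead deduces this theorem from Theorem \ref{ThIffi} via the observation that $\{11,66\}\subseteq E(\gamma_2)$ --- note that $\gamma_2$, not $\gamma_1$, is the relevant maximal graph, and $\gamma_2$ is not a subgraph of $\gamma_1$. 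That the classification needs three pairwise incomparable maximal graphs is strong evidence that no routine passage from the $\{11,66\}$-condition to the $\gamma_1$-condition exists. Your path-extension argument sits precisely where a proof would have to do real work: you explicitly leave unresolved the configuration in which every external neighbour of $v_6$ sends an edge back into $\{v_2,\dots,v_5\}$, and even when the extension to $v_2v_3v_4v_5v_6w$ succeeds, the conclusion $\max\{d(v_2),d(w)\}\geq n/2$ does not by itself yield a heavy pair $(v_i,v_j)$ with $i\in\{1,2,3\}$ and $j\in\{4,5,6\}$ inside the \emph{original} path, and the proposed ``shift and iterate'' step has no termination argument.

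The fallback route is also not viable as stated. Applying Theorem \ref{ThChWeZh} or the Broersma--Veldman theorem to $\cl(G)$ requires $\cl(G)$ to be $P_6$-free, which does not follow from (and is far stronger than) your hypothesis; the closure in general retains induced copies of $P_6$, and an induced $P_6$ of $\cl(G)$ need not come from one of $G$, so ``preserving the endpoint-heaviness hypothesis'' is not even the right target. The paper's actual argument runs in the opposite direction: it assumes $\cl(G)$ is non-hamiltonian, extracts an induced subgraph $H\in\mathcal{P}$ by Brousek's Theorem \ref{ThBr}, and uses the region structure (Lemmas \ref{LeClosed} and \ref{LeRegion}) to lift paths of $H$ back to induced paths of $G$, thereby exhibiting an induced $P_6$ in $G$ both of whose end-vertices are light --- contradicting the hypothesis. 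Some version of this ``bad $P_6$'' construction, or another genuinely new idea, is required; neither of your two routes supplies it.
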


Now we will go further on this direction.
Before giving our main results, we will first introduce some necessary terminology and notations.

Let $\gamma$ be a graph (possibly with loops) with vertex set
$\mathfrak{I}=\{1,2,3,4,5,6\}$ . We say that a graph $G$ is
$P_6$-$\gamma$-\emph{heavy} if, for every induced copy of $P_6:
v_1v_2v_3v_4v_5v_6$ in $G$, there exist $i,j\in\mathfrak{I}$ such
that $ij\in E(\gamma)$ and $d(v_i)+d(v_j)\geq n$, where $n=|V(G)|$.
Note that if $\gamma'$ is a (spanning) subgraph of $\gamma$, then a
$P_6$-$\gamma'$-heavy graph is also $P_6$-$\gamma$-heavy.

For two graphs $\gamma$ and $\gamma'$ on $\mathfrak{I}$ such that
$ij\in E(\gamma)$ if and only if $(7-i)(7-j)\in E(\gamma')$, we say
$\gamma$ is \emph{symmetrical} to $\gamma'$. Note that if $\gamma$ and
$\gamma'$ are symmetrical to each other, then a graph $G$ is
$P_6$-$\gamma$-heavy if and only if $G$ is $P_6$-$\gamma'$-heavy. If
$\gamma$ is symmetrical to itself, then we say $\gamma$ is
\emph{symmetrical}.

Let $\varepsilon$ be the empty graph on $\mathfrak{I}$. Then a graph
$G$ is $P_6$-free if and only if it is $P_6$-$\varepsilon$-heavy.
Let $\sigma$ be the graph on $\mathfrak{I}$ with edge set
$E(\sigma)=\{ij: |j-i|\geq 2, i,j\in \mathfrak{I}\}$. Then a graph
is $P_6$-\emph{o}-heavy means it is $P_6$-$\sigma$-heavy. Let
$\gamma_1$ be the graph on $\mathfrak{I}$ with edge set $\{ij:
i=1,2,3 \mbox{ and } j=4,5,6\}$. Then Theorem \ref{ThopHeavy} states
that every 2-connected claw-\emph{o}-heavy and
$P_6$-$\gamma_1$-heavy graph is hamiltonian.

The goal of this paper is to find all symmetrical graphs $\gamma$ on
$\mathfrak{I}$ such that every 2-connected claw-\emph{o}-heavy {and}
$P_6$-$\gamma$-heavy graph is hamiltonian.

We describe the graphs $\gamma_1,\gamma_2,\gamma_3$ on
$\mathfrak{I}$ by giving their edge sets (also see Figure 2.):
\begin{align*}
    & E(\gamma_1)=\{14,15,16,24,25,26,34,35,36\};\\
    & E(\gamma_2)=\{11,12,14,15,16,25,26,36,56,66\};\\
    & E(\gamma_3)=\{13,14,15,25,26,36,46\}.
\end{align*}

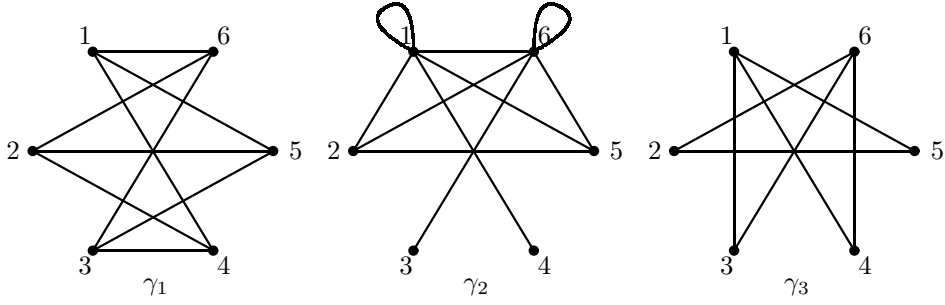
\begin{figure}

\begin{center}
\setlength{\unitlength}{0.75pt}\small
\begin{picture}(480,165)
\thicklines

\put(0,0){\put(80,80){\put(-30,50){\circle*{4}}
\put(-60,0){\circle*{4}} \put(-30,-50){\circle*{4}}
\put(30,-50){\circle*{4}} \put(60,0){\circle*{4}}
\put(30,50){\circle*{4}} \put(-37,54){1} \put(-73,-4){2}
\put(-37,-62){3} \put(32,-62){4} \put(68,-4){5} \put(32,54){6}
\put(-30,50){\line(3,-5){60}} \put(-30,50){\line(9,-5){90}}
\put(-30,50){\line(1,0){60}} \put(-60,0){\line(9,-5){90}}
\put(-60,0){\line(1,0){120}} \put(-60,0){\line(9,5){90}}
\put(-30,-50){\line(1,0){60}} \put(-30,-50){\line(9,5){90}}
\put(-30,-50){\line(3,5){60}} \put(-5,-70){$\gamma_1$}}}

\put(160,0){\put(80,80){\put(-30,50){\circle*{4}}
\put(-60,0){\circle*{4}} \put(-30,-50){\circle*{4}}
\put(30,-50){\circle*{4}} \put(60,0){\circle*{4}}
\put(30,50){\circle*{4}} \put(-37,54){1} \put(-73,-4){2}
\put(-37,-62){3} \put(32,-62){4} \put(68,-4){5} \put(32,54){6}
\put(-30,50){\line(-3,-5){30}} \put(30,50){\line(3,-5){30}}
\put(-30,50){\line(1,0){60}} \put(-60,0){\line(9,5){90}}
\put(-30,50){\line(3,-5){60}} \put(-30,50){\line(9,-5){90}}
\put(-60,0){\line(1,0){120}} \put(30,50){\line(-3,-5){60}}
\qbezier(-30,50)(-30,80)(-43.5,72.5)
\qbezier(-30,50)(-57,65)(-43.5,72.5)
\qbezier(30,50)(30,80)(43.5,72.5) \qbezier(30,50)(57,65)(43.5,72.5)
\put(-5,-70){$\gamma_2$}}}

\put(320,0){\put(80,80){\put(-30,50){\circle*{4}}
\put(-60,0){\circle*{4}} \put(-30,-50){\circle*{4}}
\put(30,-50){\circle*{4}} \put(60,0){\circle*{4}}
\put(30,50){\circle*{4}} \put(-37,54){1} \put(-73,-4){2}
\put(-37,-62){3} \put(32,-62){4} \put(68,-4){5} \put(32,54){6}
\put(-60,0){\line(9,5){90}} \put(-30,50){\line(0,-1){100}}
\put(-30,50){\line(3,-5){60}} \put(-30,50){\line(9,-5){90}}
\put(-60,0){\line(1,0){120}} \put(30,50){\line(-3,-5){60}}
\put(30,50){\line(0,-1){100}} \put(-5,-70){$\gamma_3$}}}

\end{picture}

\caption{Graphs on $\mathfrak{I}$: $\gamma_1$, $\gamma_2$ and
$\gamma_3$}\label{FiGamma}
\end{center}
\end{figure}

The following is our main result. Note that both Theorems 7 and 8
are its corollaries.

\begin{theorem}\label{ThIffi}
Let $\gamma$ be a symmetrical graph on $\mathfrak{I}$. Then every
2-connected claw-o-heavy {and} $P_6$-$\gamma$-heavy graph is hamiltonian
if and only if $\gamma$ is a subgraph of $\gamma_1$,
$\gamma_2$ or $\gamma_3$.
\end{theorem}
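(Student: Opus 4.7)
The plan is to prove both directions of the equivalence separately, exploiting the monotonicity observation that a $P_6$-$\gamma'$-heavy graph is $P_6$-$\gamma$-heavy whenever $\gamma' \subseteq \gamma$.

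For the sufficiency (``if'') direction, monotonicity lets me reduce to proving hamiltonicity of every 2-connected claw-\emph{o}-heavy graph $G$ that is $P_6$-$\gamma_i$-heavy for $i=1,2,3$ separately. The $i=1$ case is exactly Theorem \ref{ThopHeavy}, so only $\gamma_2$ and $\gamma_3$ require new work. My approach in both cases is a closure/longest-cycle argument: assume $G$ is a counterexample, apply the appropriate claw-\emph{o}-heavy closure to simplify $G$ while preserving hamiltonicity and the $P_6$-$\gamma$-heavy condition, choose a longest cycle $C$ with a vertex $v \notin C$, and study the attachment of $v$ and of a shortest detour along $C$. Using 2-connectedness, the structure of the neighborhood of $v$ in $C$ together with the claw-\emph{o}-heavy hypothesis forces the existence of an induced $P_6:v_1v_2\cdots v_6$ positioned so that suitable $v_i$'s lie on $C$ and at least one lies outside $C$; the degree pair granted by $E(\gamma_2)$ or $E(\gamma_3)$ then yields either a cycle longer than $C$ (by a Bondy--Chv\'atal--style crossing/insertion argument) or a hamiltonian cycle directly.

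For the necessity (``only if'') direction, I will enumerate, up to the symmetry $i\mapsto 7-i$, the minimal symmetrical $\gamma$ on $\mathfrak{I}$ that fail to be a subgraph of any of $\gamma_1,\gamma_2,\gamma_3$, and exhibit a non-hamiltonian 2-connected claw-\emph{o}-heavy graph that is $P_6$-$\gamma$-heavy for each. A quick orbit count shows the symmetric edge-orbits on $\mathfrak{I}$ are $\{11,66\},\{22,55\},\{33,44\},\{12,56\},\{13,46\},\{14,36\},\{15,26\},\{16\},\{23,45\},\{24,35\},\{25\},\{34\}$, and the only orbits lying in none of $\gamma_1,\gamma_2,\gamma_3$ are $\{22,55\}$, $\{33,44\}$ and $\{23,45\}$. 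So a symmetric $\gamma$ is ``bad'' either because it contains one of these three orbits, or because it consists of orbits each appearing in some $\gamma_i$ but with no single $\gamma_i$ containing them all. The minimal bad cases of the second type are the small unions such as $\{11,66\}\cup\{13,46\}$, $\{12,56\}\cup\{13,46\}$, $\{12,56\}\cup\{24,35\}$, and $\{13,46\}\cup\{24,35\}$, plus a short further list one can enumerate from the orbit table. For each, the strategy is to adapt the non-hamiltonian construction of Li et al.\ \cite{Li_Ryjacek_Wang_Zhang} (a 2-connected claw-\emph{o}-heavy, $P_6$-\emph{o}-heavy, non-hamiltonian graph) and tune vertex degrees in the blow-up parts so that only induced $P_6$'s with nonadjacent endpoints in the prescribed orbits realize a large degree-sum.

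The main obstacle will be the sufficiency proof for $\gamma_3$, since $\gamma_3$ has no loops and its edges do not ``span'' the $P_6$ as symmetrically as $\gamma_1$ does: the edges $\{13,46\},\{14,36\},\{15,26\},\{25\}$ always involve an interior vertex of the $P_6$, so one cannot freely pick endpoints of the $P_6$ to obtain a degree-sum bound. Consequently, when extending the longest cycle $C$, the induced $P_6$ supplied by an uncovered vertex $v$ must be chosen so that $v$ sits at a vertex position matching the $\gamma_3$-edge structure; this likely requires a finer case analysis on how the neighbors of $v$ on $C$ are distributed, and on how a suitable induced $P_6$ can be located along $C\cup\{v\}$. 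I expect the bookkeeping of these local configurations to be the most delicate part of the argument, while the $\gamma_2$ case should be closer to the $\gamma_1$ proof because the loops at $1$ and $6$ reintroduce a Dirac-type endpoint condition that already suffices for many subcases.
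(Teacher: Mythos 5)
Your orbit analysis for the ``only if'' direction is correct (the three orbits $\{22,55\}$, $\{33,44\}$, $\{23,45\}$ missing from all of $\gamma_1,\gamma_2,\gamma_3$, and the cross-orbit combinations), and the overall strategy of exhibiting non-hamiltonian claw-\emph{o}-heavy counterexamples is the same as the paper's. But the paper is more economical: rather than enumerating all minimal bad $\gamma$ and building a graph for each, it uses just three graphs $G_1,G_2,G_3$ in sequence, first forcing $E(\gamma)$ to avoid $\{22,23,24,33,34,35,44,45,55\}$, then deducing $13,46\in E(\gamma)$, then forcing avoidance of $\{11,16,66\}$, deducing $12,56\in E(\gamma)$, and finally refuting $\{12,13,46,56\}\subseteq E(\gamma)$ with $G_2$. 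Your plan would work if the constructions exist, but you have not produced any of them, and ``tune vertex degrees in the blow-up parts'' is exactly the nontrivial content being deferred.

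The sufficiency direction has a genuine gap. Your reduction step asserts that applying the claw-\emph{o}-heavy closure preserves the $P_6$-$\gamma$-heavy condition; this is not true, and it is precisely the difficulty the paper is organized around. The closure adds edges, so an induced $P_6$ of $\cl(G)$ need not be an induced $P_6$ of $G$ (some path edge may be absent in $G$), and the hypothesis on $G$ says nothing about such paths; stability of heavy-subgraph classes under closure is a separate, nontrivial matter. The paper avoids this by working in $G$ itself: it passes to $\cl(G)$ only to invoke Brousek's theorem, obtaining an induced subgraph $H\in\mathcal{P}$ of $\cl(G)$, and then uses the region decomposition and the induced paths $\varPi[u,v]$ of $G$ (whose internal vertices are interior vertices of a region, and along which vertices at distance at least $3$ form light pairs) to manufacture an induced $P_6$ \emph{of $G$} that is simultaneously bad for each $\gamma_k$, via a case analysis on where the heavy vertices of $G$ sit inside the inflated copy of $H$. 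Your alternative (longest cycle plus insertion) contains no mechanism for producing the required induced $P_6$ with degree control at the specific positions demanded by $\gamma_2$ and $\gamma_3$ --- you acknowledge this yourself for $\gamma_3$ --- so the core of the argument is missing. A smaller point: within this paper Theorem \ref{ThopHeavy} is itself a corollary of Theorem \ref{ThIffi}, so invoking it for the $\gamma_1$ case would be circular unless you supply an independent proof.
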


\section{Preliminaries}

In this section, we will introduce some preliminaries for the proof
of the `if' part of Theorem \ref{ThIffi}, which are similar to the
ones in \cite{Ning_Zhang_Li}. We first introduce \v{C}ada's closure
theory of claw-\emph{o}-heavy graphs \cite{Cada}, which is an
extension of the closure theory of claw-free graphs invented by
Ryj\'a\v{c}ek \cite{Ryjacek}.

Let $G$ be a graph of order $n$. We say {that} a vertex $x\in V(G)$ is
\emph{heavy} in $G$ if $d(x)\geq n/2$; and a pair of vertices
$\{x,y\}$ is \emph{heavy} in $G$ if $d(x)+d(y)\geq n$. We say {that} a
vertex (a pair of vertices) is \emph{light} if it is not heavy. Note
that if $\{x,y\}$ is a heavy pair, then either $x$ or $y$ is a heavy
vertex.

Let $G$ be a graph and $x\in V(G)$. The \emph{{local completion(?)} of
$G$ at $x$}, denoted by $G'_x$, is the graph obtained from $G$ by
adding all missing {edges in $G[N(x)]$}. Define $B^o_x(G)=\{uv:
\{u,v\}\subset N(x) \mbox{ is a heavy pair of } G\}$. Let $G^o_x$ be
{the} graph with vertex set $V(G^o_x)=V(G)$ and edge set
$E(G^o_x)=E(G)\cup B^o_x(G)$. If $G^o_x[N(x)]$ consists of two
disjoint cliques $C_1$ and $C_2$, then we call a vertex $z\in
V(G)\backslash(\{x\}\cup N(x))$ a \emph{join vertex} of $x$ in $G$
if $\{x,z\}$ is a heavy pair in $G$, and there are two vertices
$y_1\in C_1$ and $y_2\in C_2$ such that $zy_1,zy_2\in E(G)$. The
vertex $x$ is an \emph{o-eligible vertex} of $G$, if $N(x)$ is not a
clique and, $G^o_x[N(x)]$ is connected or, $G^o_x[N(x)]$ consists of
two disjoint cliques and there is some join vertex of $x$.

Let $G$ be a claw-\emph{o}-heavy graph. The \emph{closure} of $G$,
denoted by $\cl(G)$, is the graph such that there is a sequence of
graphs $G_1,G_2,\ldots,G_t$ and a sequence of vertices
$x_1, x_2,\ldots,x_{t-1}$ such that: \\
(1) $G=G_1$, $G_t=\cl(G)$;\\
(2) for $i=1,2,\ldots,t-1$, $G_{i+1}$ is the {local completion(?)} of
$G_i$ at some \emph{o}-eligible vertex $x_i$ of $G_i$; and\\
(3) there is no \emph{o}-eligible vertex in $G_t$.

\begin{theorem}[\v{C}ada \cite{Cada}]\label{ThCa}
Let $G$ be a claw-o-heavy graph. Then\\
(1) the closure $\cl(G)$ is uniquely determined;\\
(2) there is a $C_3$-free graph $H$ such that $\cl(G)$ is the line
graph of $H$; and\\
(3) $G$ is hamiltonian if and only if $\cl(G)$ is.
\end{theorem}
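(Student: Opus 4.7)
The plan is to extend Ryjá\v{c}ek's closure theory for claw-free graphs \cite{Ryjacek} to the claw-\emph{o}-heavy setting, where heavy pairs and join vertices play the role that mere connectivity of $N(x)$ plays in the classical claw-free case. I will prove the three parts in the order (1), (2), (3), with part (3) being the main substantive step and parts (1)--(2) following from local structural analysis around \emph{o}-eligible vertices.

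For part (1) (uniqueness), I would establish a diamond/confluence property at the level of a single local completion. Concretely, if $x$ is \emph{o}-eligible in $G$ and $G'_x$ is the local completion at $x$, then for every other vertex $y$ whose neighborhood is not already a clique, the \emph{o}-eligibility status of $y$ propagates into $G'_x$: either $y$ remains \emph{o}-eligible there, possibly witnessed by new heavy-pair or join-vertex configurations arising from the enlarged edge set and the only-growing degrees, or $N_{G'_x}(y)$ has become a clique and $y$ needs no further processing. Since each local completion only adds edges and the process is monotone in a finite graph, a standard confluence/Church-Rosser argument then yields that every maximal completion sequence terminates at the same graph $\cl(G)$.

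For part (2) (line-graph structure), in the terminal graph no \emph{o}-eligible vertex exists, so whenever $x$ has $N(x)$ not already a clique, $(\cl(G))^o_x[N(x)]$ must be disconnected with no join vertex. Combined with the fact that the closure operation preserves the claw-\emph{o}-heavy property (verified step-by-step along the completion sequence), this forces $N(x)$ in $\cl(G)$ to induce exactly two disjoint cliques: any third component or any induced claw would produce a heavy pair in $N(x)$, contradicting terminality. The resulting ``neighborhood is at most two cliques'' property is precisely the Krausz-type characterization of line graphs of triangle-free graphs, giving $\cl(G) = L(H)$ for some triangle-free $H$.

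For part (3) (hamiltonicity equivalence), it suffices by induction on the number of local completion steps to show that a single step $G \to G'_x$ at an \emph{o}-eligible vertex $x$ preserves hamiltonicity in both directions. The easy direction is $G$ hamiltonian $\Rightarrow G'_x$ hamiltonian, which is immediate since $E(G) \subseteq E(G'_x)$. The hard direction is the converse: let $C$ be a Hamilton cycle in $G'_x$, and suppose $C$ uses one or more new edges $y_1y_2 \in E(G'_x) \setminus E(G)$ with $y_1, y_2 \in N_G(x)$; we must reroute $C$ inside $G$ to recover a genuine Hamilton cycle. If $G^o_x[N(x)]$ is connected, take a walk in $N(x)$ between the problematic neighbors using $G$-edges and heavy-pair edges of $B^o_x(G)$, splice $x$ into $C$ along this walk, and resolve any heavy-pair edge by a Pósa-type rotation in which the Ore-type bound $d(u)+d(v)\ge n$ supplies the crossing pivot on $C$. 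If $G^o_x[N(x)]$ consists of two cliques with a join vertex $z$, the analogous rerouting uses $z$ to bridge the two cliques. The main obstacle is executing this rerouting cleanly in all configurations of how $C$ can interact with the newly added edges; this case analysis, and the justification that heavy pairs always admit the required rotation, is precisely the technical heart of \v{C}ada's extension of Ryjá\v{c}ek's closure argument.
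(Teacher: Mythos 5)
First, a framing point: the paper does not prove this statement at all. Theorem~\ref{ThCa} is quoted as a known result of \v{C}ada \cite{Cada} and used as a black box in Section~2, so there is no in-paper proof to compare your attempt against; your proposal has to stand on its own as a proof of \v{C}ada's closure theorem. Judged that way, it is a proof plan rather than a proof: each of the three parts rests on a lemma that you assert but do not establish. In part (1), the whole content is the confluence claim that after a local completion at an \emph{o}-eligible vertex $x$, every other vertex $y$ is either still \emph{o}-eligible or has a complete neighborhood; this is precisely the nontrivial step, and it in turn needs the lemma (which you also invoke in part (2) with ``verified step-by-step'' and never prove) that local completion at an \emph{o}-eligible vertex preserves the claw-\emph{o}-heavy property --- not obvious, since adding the missing edges of $G[N(x)]$ can create new induced claws centered at vertices of $N(x)$. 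In part (2), your claw argument only bounds the number of components of $(\cl(G))^o_x[N(x)]$ by two; it does not show that each component is a clique in $\cl(G)$, which is what the Krausz-type characterization of line graphs of triangle-free graphs actually requires. Non-eligibility of $x$ only says that $G^o_x[N(x)]$ is disconnected and is not two cliques with a join vertex; terminality alone does not force the components to be complete, and closing that gap needs a further argument involving vertices outside $N(x)$.

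The most serious gap is part (3), which is the substantive assertion of the theorem. You reduce to showing that a single local completion $G\to G'_x$ preserves non-hamiltonicity, sketch a rerouting (splice $x$ along a walk in $N(x)$, resolve heavy-pair edges by P\'osa/Ore rotations), and then state that the required case analysis ``is precisely the technical heart of \v{C}ada's extension.'' That heart is the theorem; deferring it means nothing has been proved. Note also two specific difficulties your sketch glosses over: a Hamilton cycle of $G'_x$ may use several new edges of $G[N(x)]$ simultaneously, most of which join \emph{light} pairs (the local completion adds all missing edges in $N(x)$, not only the heavy-pair edges of $B^o_x(G)$), so the Ore-type rotation is not available for them; and the rotation argument for a heavy pair $\{u,v\}$ only applies once one has exhibited a Hamilton path of the original graph between $u$ and $v$, which is exactly what the unperformed splicing analysis is supposed to deliver. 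So the proposal correctly identifies the Ryj\'a\v{c}ek--\v{C}ada strategy, but as written it contains no complete proof of any of the three parts.
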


Note that every line graph is claw-free (see \cite{Beineke}). The
above theorem implies that $\cl(G)$ is a claw-free graph.

Now we will give some terminology and notations firstly introduced
in \cite{Ning_Zhang_Li} by the authors. Let $G$ be a
claw-\emph{o}-heavy graph and $C$ be a maximal clique of $\cl(G)$.
We call $G[C]$ a \emph{region} of $G$. For a vertex $v$ of $G$, we
call $v$ an \emph{interior vertex} if it is contained in only one
region, and a \emph{frontier vertex} if it is contained in two
distinct regions. For two vertices $u,v\in V(G)$, we say $u$ and $v$
are \emph{associated} if $u,v$ are contained in a common region of
$G$; otherwise $u$ and $v$ are \emph{dissociated}. We denote by
$I_R$ the set of interior vertices of a region $R$, and by $F_R$ the
set of frontier vertices of $R$.

From \cite{Cada}, it is not difficult to get the following

\begin{lemma}\label{LeClosed}
Let $G$ be a claw-o-heavy graph. Then\\
(1) every vertex is either an interior vertex of a region or
a frontier vertex of two regions;\\
(2) every two regions are either disjoint or have only one common
vertex; and\\
(3) every pair of dissociated vertices have degree sum in $\cl(G)$
(and in $G$) less than $|V(G)|$.
\end{lemma}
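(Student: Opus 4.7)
The plan is to derive all three parts from \v{C}ada's closure theorem (Theorem~\ref{ThCa}), which tells us $\cl(G)=L(H)$ for some triangle-free graph $H$, together with the defining property that $\cl(G)$ contains no o-eligible vertex.

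For parts (1) and (2) I would invoke the standard description of maximal cliques in the line graph of a triangle-free graph. Each vertex of $\cl(G)=L(H)$ corresponds to an edge $e=uv\in E(H)$, and triangle-freeness of $H$ forces $N_{\cl(G)}(e)$ to split into the two disjoint cliques $\{uw:w\in N_H(u)\setminus\{v\}\}$ and $\{vw:w\in N_H(v)\setminus\{u\}\}$, with no cross edges. Therefore every maximal clique of $\cl(G)$ is either the ``star'' consisting of all edges of $H$ incident with a fixed vertex of degree $\geq 2$, or the singleton coming from an isolated edge of $H$. Each vertex $e=uv$ of $\cl(G)$ lies in at most two such stars (the ones at $u$ and at $v$), which gives (1); and two distinct stars centered at $u_1,u_2\in V(H)$ can share a vertex of $\cl(G)$ only if $u_1u_2\in E(H)$, in which case that common vertex is unique, which gives (2).

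For part (3), let $x,y$ be dissociated. Since every edge of $\cl(G)$ is contained in some maximal clique, this already forces $xy\notin E(\cl(G))$. Assume for contradiction that $d_{\cl(G)}(x)+d_{\cl(G)}(y)\geq n$. I claim that $x$ and $y$ have no common neighbor in $\cl(G)$. Indeed, suppose $z\in N_{\cl(G)}(x)\cap N_{\cl(G)}(y)$; by the paragraph above, $N_{\cl(G)}(z)$ is the disjoint union of two cliques with no cross edges, and since $xy\notin E(\cl(G))$, the vertices $x$ and $y$ lie in different cliques of this partition. The pair $\{x,y\}$ is heavy in $\cl(G)$ and contained in $N_{\cl(G)}(z)$, so $xy\in B^o_z(\cl(G))$; adding this edge connects the two cliques. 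Hence $\cl(G)^o_z[N(z)]$ is connected while $N(z)$ itself is not a clique, making $z$ an o-eligible vertex of $\cl(G)$ — contradicting the fact that $\cl(G)$ has no such vertex. Consequently $N_{\cl(G)}(x)\cap N_{\cl(G)}(y)=\emptyset$, and since $x,y\notin N(x)\cup N(y)$, we get $d_{\cl(G)}(x)+d_{\cl(G)}(y)=|N(x)\cup N(y)|\leq n-2$, a contradiction. The statement for $G$ then follows from $E(G)\subseteq E(\cl(G))$, which implies $d_G(v)\leq d_{\cl(G)}(v)$ for every $v$.

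The main obstacle is the o-eligibility argument in (3): the care point is that one applies the o-eligibility test to the \emph{closed} graph $\cl(G)$, using the heavy-pair hypothesis directly in $\cl(G)$ to place the edge $xy$ in $B^o_z(\cl(G))$ and thereby connect the two cliques of $N_{\cl(G)}(z)$. Parts (1) and (2) then reduce to routine consequences of the line-graph structure, with only a minor sanity check needed for degree-$1$ vertices and isolated edges of $H$.
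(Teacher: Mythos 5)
Your proposal is correct. Note that the paper itself gives no proof of this lemma at all: it simply remarks ``From \cite{Cada}, it is not difficult to get the following,'' so the comparison is with the content of \v{C}ada's closure theory that the citation hides, and your argument supplies exactly that content using only what the paper states, namely Theorem~\ref{ThCa}(2) (that $\cl(G)$ is the line graph of a triangle-free graph $H$) and the defining property of the closure that $\cl(G)$ contains no \emph{o}-eligible vertex. Your identification of the regions with the stars of $H$ (plus singletons from isolated edges) correctly yields (1) and (2), and your proof of (3) is sound: dissociated vertices are nonadjacent in $\cl(G)$ since every edge lies in a maximal clique; a common neighbor $z$ would have its neighborhood split into two cliques with no cross edges by triangle-freeness of $H$, the hypothetical heavy pair $\{x,y\}$ would put $xy$ into $B^o_z(\cl(G))$ and connect the two cliques, making $z$ \emph{o}-eligible in $\cl(G)$, a contradiction; hence $N_{\cl(G)}(x)\cap N_{\cl(G)}(y)=\emptyset$ and the degree sum is at most $n-2$, and the statement for $G$ follows from $E(G)\subseteq E(\cl(G))$. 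You correctly apply the \emph{o}-eligibility test in $\cl(G)$ itself (where $|V(\cl(G))|=|V(G)|$, so heaviness is the same threshold), which is the one point where a careless argument could go wrong; the remaining sanity checks (pendant and isolated edges of $H$, nonemptiness of both cliques in $N(z)$) are handled or trivially forced by the nonadjacency of $x$ and $y$.
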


We also need the following tools developed in \cite{Ning_Zhang_Li}.

\begin{lemma}\label{LeRegion}
Let $G$ be a claw-o-heavy graph and $R$ be a region of $G$. Then\\
(1) $R$ is nonseparable;\\
(2) if $v$ is a frontier vertex of $R$, then $v$ has a
neighbor in $I_R$ or $I_R=\emptyset$ and $F_R$ is a clique;\\
(3) for any two vertices $u,v\in V(R)$, there is an induced path of
$G$ from $u$ to $v$ such that every internal vertex of the path is
in $I_R$; and\\
(4) for two vertices $u,v$ in $R$, if $uv\notin E(G)$ and $\{u,v\}$
is a heavy pair of $G$, then $u,v$ have two common neighbors in
$I_R$.
\end{lemma}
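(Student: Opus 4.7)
The four parts of Lemma \ref{LeRegion} describe the internal structure of a region $R$ of a claw-o-heavy graph $G$. I would base the arguments on \v{C}ada's closure (Theorem \ref{ThCa}): writing $\cl(G) = L(H)$ with $H$ triangle-free, the region $R$ corresponds to a vertex $h$ of $H$, and $V(R) = \{e_w = hw : w \in N_H(h)\}$, with interior vertices being the $e_w$ for which $w$ is a leaf in $H$ and frontier vertices being the rest. The central structural fact I would use throughout is that every clique in $L(H)$ is contained in the set of edges at a single vertex of $H$, equivalently in the vertex set of a single region; combined with Lemma \ref{LeClosed}(2) this strongly restricts how closure steps can interact with $V(R)$. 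I would prove the parts in the order (1), (4), (2), (3).

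For (1), I would use reverse induction along the closure sequence $G = G_1, \ldots, G_t = \cl(G)$ applied to $R_i = G_i[V(R)]$: $R_t$ is complete hence nonseparable, and passing from $R_{i+1}$ back to $R_i$ means removing edges contained in $N_{G_i}(x_i) \cap V(R)$ for the o-eligible vertex $x_i$ used at step $i$. Since $N_{G_i}(x_i) \cup \{x_i\}$ is a clique in $G_{i+1} \subseteq \cl(G)$, it sits inside a single region $R'$; if $R' \neq R$ then $|N_{G_i}(x_i) \cap V(R)| \leq 1$ by Lemma \ref{LeClosed}(2) and no $V(R)$-edges are removed, while if $R' = R$ then $x_i \in V(R)$ serves as a common $R_i$-neighbor of every removed pair. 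Either way $R_i$ remains nonseparable. For (4), if $u, v \in V(R)$ are non-adjacent with $\{u,v\}$ heavy, the edge $uv$ is added at some step through an o-eligible $x_s$ with $u, v \in N_{G_s}(x_s)$; heaviness combined with Lemma \ref{LeClosed}(3) forces $x_s \in V(R)$, and the two-clique decomposition of $G_s^o[N(x_s)]$ together with the line-graph picture produces two distinct common $R$-neighbors of $u,v$ corresponding to leaves at $h$, i.e., in $I_R$. Part (2) then follows by applying (4) to a suitable heavy pair involving the frontier vertex $v$ (case $I_R \neq \emptyset$), and by showing that any non-adjacent pair in $F_R$ must be heavy, contradicting (4), in the case $I_R = \emptyset$.

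The main obstacle is part (3). My plan is an induced-path surgery: if $u = v$ or $uv \in E(G)$ we are done, and if $\{u,v\}$ is heavy then (4) yields $z \in I_R$ with $uzv$ induced. Otherwise I would use nonseparability from (1) to select a shortest induced $u$-$v$ path $P$ in $R$ and then reroute any internal vertex of $P$ that happens to be frontier through an $I_R$-neighbor guaranteed by (2). The delicate bookkeeping is to preserve inducedness after rerouting, which I would control by invoking the line-graph structure of $\cl(G)$ (so that forbidden chords would force triangles in $H$) and by iterating the surgery to reduce to a short path before the chord analysis becomes unwieldy.
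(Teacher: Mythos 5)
A preliminary remark: the paper itself contains no proof of Lemma~\ref{LeRegion} --- it is imported as a tool from \cite{Ning_Zhang_Li} --- so your proposal can only be judged on its own merits rather than against an in-paper argument. Your structural picture is correct and genuinely useful: with $\cl(G)=L(H)$, $H$ triangle-free, regions are the stars at vertices of $H$, interior vertices are the edges at leaves, and a common neighbour of two vertices of $R$ must again lie in $V(R)$ because a chord outside the star would force a triangle in $H$. But each of the four parts still has a real hole. In (1), the backward induction disposes of a potential cut vertex $w\neq x_i$ by rerouting removed edges through $x_i$, but says nothing when $w=x_i$ itself: deleting the edges inside $N_{G_i}(x_i)\cap V(R)$ could a priori disconnect $R_i-x_i$, and excluding this needs the $o$-eligibility of $x_i$ (connectedness of $G_i^o[N_{G_i}(x_i)]$ or the join vertex), which is delicate precisely because the edges of $B^o_{x_i}(G_i)$ are heavy pairs, not edges of $G_i$; ``either way $R_i$ remains nonseparable'' is exactly the statement that needs proof. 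In (4), two common $G$-neighbours of $u,v$ exist for free from $d(u)+d(v)\geq n$ and $uv\notin E(G)$, and triangle-freeness of $H$ puts all of them in $V(R)$; the actual content of the lemma is that two of them are \emph{interior}, and your appeal to ``the two-clique decomposition of $G_s^o[N(x_s)]$ together with the line-graph picture'' names no mechanism ruling out that the common neighbours are frontier vertices. (Also, $x_s\in V(R)$ follows from Lemma~\ref{LeClosed}(2) and the fact that $u,v\in N_{G_s}(x_s)\cap V(R)$, not from heaviness plus Lemma~\ref{LeClosed}(3).)

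Part (2) carries no heaviness hypothesis, yet your derivation needs heavy pairs in two places and supplies neither: you give no reason a frontier vertex $v$ without neighbours in $I_R$ must belong to a heavy pair inside $R$, nor that a nonadjacent pair of $F_R$ is heavy when $I_R=\emptyset$ --- local completion at an $o$-eligible vertex adds edges between \emph{all} pairs in its neighbourhood, heavy or not, so adjacency in $\cl(G)$ of two frontier vertices does not make them a heavy pair of $G$. The only source of heavy pairs is the eligibility structure along the closure sequence (the sets $B^o_x$ and join vertices), which your plan never engages. Finally, for (3) the ``delicate bookkeeping'' you defer is the whole difficulty; note that it would suffice to show that $u$ and $v$ lie in one component of $G[I_R\cup\{u,v\}]$ (a shortest $u$--$v$ path in that subgraph is automatically induced in $G$, and the case $I_R=\emptyset$ is handled by (2)), but your surgery sketch establishes neither this connectivity nor the inducedness after rerouting. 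In short, the skeleton and the line-graph viewpoint are sound, but every load-bearing step is still missing.
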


For two associated vertices $u,v$, by Lemma \ref{LeRegion} (3), we
use $\varPi[u,v]$ to denote a shortest path such that every internal
vertex of it is an interior vertex of the region containing $u,v$.
From Lemma \ref{LeRegion} (4), we can see that every two vertices of
$\varPi[u,v]$ at distance at least 3 in $\varPi[u,v]$ is not a
heavy pair in $G$.

Following \cite{Brousek}, we define $\mathcal{P}$ to be the class of
graphs obtained by taking two vertex-disjoint triangles
$a_1a_2a_3a_1$, $b_1b_2b_3b_1$ and by joining every pair of vertices
$\{a_i,b_i\}$ by a path $P_{k_i}: a_ic_i^1c_i^2\cdots
c_i^{k_i-2}b_i$, for $k_i\geq 3$ or by a triangle $a_ib_ic_i$. We
denote the graphs in $\mathcal{P}$ by $P_{x_i,x_2,x_3}$, where
$x_i=k_i$ if $a_i,b_i$ are joined by a path $P_{k_i}$, and $x_i=T$
if $a_i,b_i$ are joined by a triangle.

The following theorem plays the central role in our proof.

\begin{theorem}[Brousek \cite{Brousek}]\label{ThBr}
Every non-hamiltonian 2-connected claw-free graph contains an
induced subgraph $H\in \mathcal{P}$.
\end{theorem}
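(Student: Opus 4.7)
The plan is to handle the two directions separately. For the \emph{if} direction, first observe that being $P_6$-$\gamma$-heavy is preserved under enlarging $\gamma$: if $\gamma' \subseteq \gamma$, then every $P_6$-$\gamma'$-heavy graph is $P_6$-$\gamma$-heavy. Hence it suffices to prove hamiltonicity in the three maximal cases $\gamma \in \{\gamma_1,\gamma_2,\gamma_3\}$. The case $\gamma=\gamma_1$ is Theorem~\ref{ThopHeavy}, which I can invoke directly. For $\gamma_2$ and $\gamma_3$ I argue by contradiction: let $G$ be a 2-connected claw-o-heavy $P_6$-$\gamma_i$-heavy counterexample. Passing to $\cl(G)$ via Theorem~\ref{ThCa} preserves 2-connectedness, makes the graph claw-free, preserves the heavy-pair condition on any induced $P_6$ that survives closure (closure only adds edges, never decreasing degree sums), and preserves non-hamiltonicity, so we may assume $G=\cl(G)$ is claw-free and closed. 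Brousek's Theorem~\ref{ThBr} then gives an induced copy of some $P_{k_1,k_2,k_3}\in\mathcal{P}$ in $G$. The core combinatorial work is to enumerate induced $P_6$s inside this configuration, apply $P_6$-$\gamma_i$-heaviness to each to produce a heavy pair $\{v_i,v_j\}$ with $ij\in E(\gamma_i)$; by Lemma~\ref{LeClosed}(3) each such pair must be associated, and Lemma~\ref{LeRegion} restricts the possible region structure severely. Running the analysis over all shapes of $P_{k_1,k_2,k_3}$ and all positions in $E(\gamma_2),E(\gamma_3)$ should force a contradiction, either by producing an o-eligible vertex (against $G=\cl(G)$) or by assembling a Hamilton cycle.

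For the \emph{only if} direction I need to exhibit, for every symmetrical $\gamma$ not contained in any of $\gamma_1,\gamma_2,\gamma_3$, a non-hamiltonian 2-connected claw-o-heavy $P_6$-$\gamma$-heavy graph. By the same monotonicity observation it is enough to treat the \emph{minimal} symmetrical such $\gamma$; enumerating these on the six-vertex set $\mathfrak{I}$ yields a short, explicit list, each consisting of a single edge (or a symmetric pair of edges or a loop pair) that violates containment in all three of $\gamma_1,\gamma_2,\gamma_3$ simultaneously. For each minimal $\gamma$, the counterexamples are naturally drawn from Brousek's class $\mathcal{P}$: I would take a suitable $P_{x_1,x_2,x_3}$ (e.g.\ $P_{T,T,T}$ or $P_{3,3,3}$), optionally replace selected vertices by large cliques or attach enough pendants to make certain vertices heavy, and verify by direct inspection of the (few) induced $P_6$s that every heavy pair forced by the construction sits at a position $(i,j)$ already recorded by $E(\gamma)$. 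The non-hamiltonicity of these examples is immediate from their $\mathcal{P}$-shape together with standard cut-vertex considerations.

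The main obstacle is the sufficiency direction for $\gamma_2$ and $\gamma_3$. The loop edges $11,66$ in $\gamma_2$ encode the requirement that the endpoint $v_1$ or $v_6$ is itself heavy, while the long-range edges $13,15,26,36,46$ of $\gamma_3$ force heavy pairs that skip nontrivial distances inside the induced $P_6$; both features mix with the region/frontier/interior structure in an intricate way, and the triangle variants $x_i=T$ in $\mathcal{P}$ have to be handled without inadvertently producing a Hamilton cycle. Lemma~\ref{LeRegion}(4), which upgrades a heavy non-adjacent pair within a region to a pair with two common interior neighbors, should supply the main quantitative leverage in each subcase, while Lemma~\ref{LeRegion}(3) is what lets us lift the $P_{k_1,k_2,k_3}$ skeleton of $\cl(G)$ back to a sufficiently rich family of induced $P_6$s in $G$ to drive the contradiction.
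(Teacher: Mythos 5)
Your proposal does not prove the statement it is supposed to prove. The statement is Brousek's theorem (Theorem~\ref{ThBr}): every non-hamiltonian 2-connected claw-free graph contains an induced subgraph belonging to the class $\mathcal{P}$ of graphs built from two disjoint triangles joined by three paths or triangles. What you have written instead is a proof sketch of the paper's main result, Theorem~\ref{ThIffi} (the characterization of the symmetrical graphs $\gamma$ on $\mathfrak{I}$), complete with an ``if'' direction using the closure $\cl(G)$ and an ``only if'' direction via counterexample constructions. Worse, your argument explicitly \emph{invokes} Theorem~\ref{ThBr} (``Brousek's Theorem~\ref{ThBr} then gives an induced copy of some $P_{k_1,k_2,k_3}\in\mathcal{P}$''), so as a purported proof of that very theorem it is circular; and nothing in the sketch supplies an independent argument that a non-hamiltonian 2-connected claw-free graph must contain an induced member of $\mathcal{P}$.

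Note also that in the paper this theorem is a quoted external result, cited from Brousek's article on minimal 2-connected non-hamiltonian claw-free graphs, and is used as a black box; a genuine proof of it would require a structural analysis of claw-free graphs (e.g.\ examining a longest cycle and the attachment of vertices outside it, or characterizing the minimal non-hamiltonian claw-free graphs directly), none of which appears in your proposal. To repair this you would need to either reproduce Brousek's argument or give an independent one; the machinery you describe (closure, regions, $P_6$-$\gamma$-heaviness) is aimed at a different theorem and does not bear on this statement.
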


\section{Proof of the `if' part of Theorem \ref{ThIffi}}

Let $G$ be a claw-\emph{o}-heavy non-hamiltonian graph of order $n$.
For each $\gamma_k$, $k=1,2,3$, we will show that there exists an
induced $P_6: v_1v_2\cdots v_6$ such that for every edge $ij\in
E(\gamma_k)$, $d(v_i)+d(v_j)<n$. {For convenience, we call such an
induced $P_6$ a \emph{bad $P_6$ to $\gamma_k$} in the following.}

Let $G'=\cl(G)$. By Theorem \ref{ThCa}, $G'$ is claw-free and
non-hamiltonian. By Theorem \ref{ThBr}, let $H\subseteq G'$ be an
induced copy of some graph in $\mathcal{P}$. We denote the vertices
of $H$ as in Section 2. If $x_i=k_i$, then let $a'_i$ be the
neighbor of $a_i$ on $\varPi[a_i,c_i^1]$, $b'_i$ be the neighbor of
$b_i$ on $\varPi[b_i,c_i^{k_i-2}]$, and let
$\varPi_i=\varPi[a_i,c_i^1]c_i^1\varPi[c_i^1,c_i^2]c_i^2\cdots
c_i^{k_i-2}\varPi[c_i^{k_i-2},b_i]$. If $x_i=T$, then let $a'_i$ be
the neighbor of $a_i$ on $\varPi[a_i,c_i]$, $b'_i$ be the neighbor
of $b_i$ on $\varPi[b_i,c_i]$, and let $\varPi_i=\varPi[a_i,b_i]$.
For $1\leq i,j\leq 3$, let $\varPi_{ij}^a=\varPi[a_i,a_j]$ and
$\varPi_{ij}^b=\varPi[b_i,b_j]$. Let $a'_{ij}$ ($b'_{ij}$) be the
neighbor of $a_i$  ($b_i$) on $\varPi_{ij}^a$ ($\varPi_{ij}^b$). Set
$$S=\bigcup_{1\leq i\leq
3}(\{a'_i,b'_i\}\cup\varPi_i)\cup\bigcup_{1\leq i,j\leq
3}(\varPi_{ij}^a\cup\varPi_{ij}^b).$$

For a path $P$ with the origin $x$, we use $P|_i^x$ (or briefly,
$P|_i$) to denote the subpath of $P$ consisting of the first $i$ edges
of $P$. If $P=v_1v_2\cdots v_p$, then we denote
$\overleftarrow{P}=v_pv_{p-1}\ldots v_1$.

\setcounter{claim}{0}
\begin{claim}\label{ClHeavy}
There is a heavy vertex of $G$ in $S\backslash\{a_i,b_i: 1\leq i\leq
3\}$, or there are two heavy vertices in $\{a_i,b_i: 1\leq i\leq
3\}$.
\end{claim}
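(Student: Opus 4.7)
The strategy is proof by contradiction: suppose no vertex of $S\setminus\{a_i,b_i:1\le i\le 3\}$ is heavy in $G$, and at most one vertex of $\{a_i,b_i:1\le i\le 3\}$ is heavy. I will aim to derive a Hamilton cycle of $G$, contradicting non-hamiltonicity together with Theorem \ref{ThCa}(3).

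First I would harvest degree inequalities from the induced copy $H\cong P_{x_1,x_2,x_3}$ of a Brousek graph in $G'=\cl(G)$. Because $H$ is induced, $a_ib_j\notin E(G')$ whenever $i\ne j$, and also $a_ib_i\notin E(G')$ whenever $x_i=k_i\ge 3$. In each such case $a_i$ and $b_j$ lie in no common maximal clique of $G'$, so they are dissociated, and Lemma \ref{LeClosed}(3) yields $d_G(a_i)+d_G(b_j)<n$. Combined with the standing hypothesis that at most one of the six triangle corners is heavy, these inequalities force essentially all of $a_1,a_2,a_3,b_1,b_2,b_3$ to be light.

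Next I would invoke Lemma \ref{LeRegion} inside the regions traversed by $\varPi_i$, $\varPi^a_{ij}$ and $\varPi^b_{ij}$. Each such path is induced in $G$ with interior vertices inside a single region (Lemma \ref{LeRegion}(3)), and by Lemma \ref{LeRegion}(4) no two vertices at distance at least $3$ on $\varPi$ form a heavy pair of $G$. Under the contradiction hypothesis the interior vertices, together with the endpoints $a'_i,b'_i,a'_{ij},b'_{ij}$, are all light, so no additional heavy-pair shortcuts inside $S$ can arise from the closure operation. Lemma \ref{LeRegion}(2) then pins down how frontier vertices connect to the interior of each region.

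The plan is then to splice the three connectors $\varPi_i$ with the two in-triangle routes $\varPi^a_{12}\cup\varPi^a_{23}$ and $\varPi^b_{12}\cup\varPi^b_{23}$ into a cycle, and to absorb the remaining vertices of $V(G)\setminus V(H)$ using the leafy structure of interior vertices within each region (recall from Theorem \ref{ThCa}(2) that $\cl(G)$ is the line graph of a triangle-free graph). The outcome would be a Hamilton cycle in $\cl(G)$, hence in $G$ by Theorem \ref{ThCa}(3), contradicting non-hamiltonicity. I expect the main obstacle to be the splicing step, which requires a case analysis over the shapes of $H$ (each $x_i\in\{T,3,4,\dots\}$) and the position of the possibly unique heavy corner in $\{a_i,b_i\}$; the crucial tool is Lemma \ref{LeRegion}(4), which supplies two interior common neighbours whenever a heavy pair does occur in a region and thereby gives the rerouting freedom needed to close the cycle.
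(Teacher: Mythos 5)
Your overall strategy cannot work, because the claim is not literally true as a free-standing statement about all 2-connected claw-\emph{o}-heavy non-hamiltonian graphs, so no contradiction with non-hamiltonicity can be extracted from its negation. Take $G=P_{3,3,3}$ itself: it is 2-connected, claw-free (hence claw-\emph{o}-heavy), non-hamiltonian, has $n=9$ and maximum degree $3<n/2$, so it has no heavy vertices whatsoever---neither in $S\backslash\{a_i,b_i: 1\leq i\leq 3\}$ nor two among the six corners. Your plan of ``splicing the connectors into a Hamilton cycle'' is therefore doomed from the start: the graphs in $\mathcal{P}$ supplied by Theorem \ref{ThBr} are precisely minimal non-hamiltonian claw-free graphs, so the induced copy $H$ is an obstruction to hamiltonicity, not raw material for building a Hamilton cycle, and Lemma \ref{LeRegion}(4) gives you no ``rerouting freedom'' when, as in the example, there are no heavy pairs at all.

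What you are missing is the logical role of the claim inside Section 3, whose goal is to exhibit, in every claw-\emph{o}-heavy non-hamiltonian $G$, a bad $P_6$ to each $\gamma_k$. The paper proves the claim by showing that if it fails---so that, taking $a_1$ to be a maximum-degree vertex among the corners, the only possibly heavy vertex of $S$ is $a_1$---then the induced path $P=b'_1b_1\varPi_{12}^bb_2\overleftarrow{\varPi_2}a_2\varPi_{23}^aa_3a'_3$ avoids $a_1$, lies entirely in $S$, has order at least $6$, and consists only of light vertices; hence every pair on it is light and $P|_5$ is a bad $P_6$ to every $\gamma_k$, which terminates the entire proof in that branch. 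In other words the claim means ``either we are already done, or we may assume there is such a heavy vertex,'' and its proof is a two-line construction of an all-light induced $P_6$, not an argument that contradicts non-hamiltonicity.
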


\begin{proof}
Up to symmetry, suppose that $a_1$ is the vertex with the largest degree
among all vertices in $\{a_i,b_i: 1\leq i\leq 3\}$. If $G$ has no
heavy vertex in $S$ or has the only one heavy vertex $a_1$ in
$S$, then
$P=b'_1b_1\varPi_{12}^bb_2\overleftarrow{\varPi_2}a_2\varPi_{23}^aa_3a'_3$
is an induced path of order at least 6 and each vertex of $P$ is not
heavy in $G$. Thus $P|_5$ is a bad $P_6$ to every $\gamma_k$.
\end{proof}

Note that any two heavy vertices are associated. Up to symmetry, we
have the following cases:

\begin{case}
There is a heavy vertex in
$\{a'_1,b'_1\}\cup(V(\varPi_1)\backslash\{a_1,b_1\})$, or both $a_1$
and $b_1$ are heavy.
\end{case}

Clearly every heavy vertex of $G$ contained in $S$ is in
$\{a'_1,b'_1\}\cup V(\varPi_1)$. Also clearly either $\bigcup_{1\leq
i,j\leq 3}V(\varPi_{ij}^a)$ or $\bigcup_{1\leq i,j\leq
3}V(\varPi_{ij}^b)$ contains no heavy pair of $G$. We suppose
without loss of generality that $\bigcup_{1\leq i,j\leq
3}V(\varPi_{ij}^a)$ contains no heavy pair of $G$. Let
$Q_1=a'_1a_1\varPi_{12}^aa_2\varPi_2b_2\varPi_{23}^bb_3b'_3$. Then
$\overleftarrow{Q_1}|_5$ is a bad $P_6$ to $\gamma_1$.

Suppose now that $\varPi_1=a_1x_1x_2\cdots x_{p-1}b_1$, where $p$ is
the length of $\varPi_1$.

\begin{subcase}
$p=1$, i.e., $\varPi_1=a_1b_1$.
\end{subcase}

Let $Q_2=(\varPi_{12}^aa_2a'_2)|_2$ and
$Q'_2=(\varPi_{13}^bb_3b'_3)|_2$. Then
$\overleftarrow{Q_2}a_1b_1Q'_2$ is a bad $P_6$ to $\gamma_2$,
$\gamma_3$.

\begin{subcase}
$p=2$, i.e., $\varPi_1=a_1x_1b_1$.
\end{subcase}

Let $Q_2=(\varPi_{12}^bb_2b'_2)|_2$. Then
$a'_{13}a_1x_1b_1\overleftarrow{Q_2}$ is a bad $P_6$ to $\gamma_2$,
$\gamma_3$.

\begin{subcase}
$p=3$, i.e., $\varPi_1=a_1x_1x_2b_1$.
\end{subcase}

Note that $\{a_1,b_1\}$ is light. Suppose first that $\bigcup_{1\leq
i,j\leq 3}V(\varPi_{ij}^a)$ contains a heavy pair of $G$. Then $b_1$
is heavy, $a_1$ is light, $\{a_1,x_1\}$ is light and $\{a_1,x_2\}$
is light. Let $Q_2=(b_1\varPi_{12}^bb_2b'_2)|_2$. Then
$a_1x_1x_2b_1\overleftarrow{Q_2}$ is a bad $P_6$ to $\gamma_5$,
$\gamma_2$. Let
$Q_3=x_1a_1\varPi_{12}^aa_2\varPi_2b_2\varPi_{23}^bb_3b'_3$. Then
$Q_3|_5$ is a bad $P_6$ to $\gamma_3$.

Now we suppose that $\bigcup_{1\leq i,j\leq 3}V(\varPi_{ij}^a)$
contains no heavy pairs of $G$. Recall that $\bigcup_{1\leq i,j\leq
3}V(\varPi_{ij}^b)$ contains no heavy pairs of $G$, either
$Q_2=a'_{12}a_1x_1x_2b_1b'_{13}$ is a bad $P_6$ to $\gamma_2$,
$\gamma_3$.

\begin{subcase}
$p\geq 4$.
\end{subcase}

If $x_1$ is light, then
$Q_2=(x_1a_1\varPi_{12}^aa_2\varPi_2b_2\varPi_{23}^bb_3b'_3)|_5$
contains no heavy vertices of $G$, and hence is bad to $\gamma_2$,
$\gamma_3$. So we assume that $x_1$ is heavy, and similarly,
$x_{p-1}$ is heavy. This implies that $p=4$ and $a_1,b_1$ are light.

Note that either $\{a_1,x_1\}$ is light or $\{b_1,x_3\}$ is light.
we assume without loss of generality that $\{a_1,x_1\}$ is light.
Thus $Q_2=a_1x_1x_2x_3b_1b'_{12}$ is bad to $\gamma_2$, $\gamma_3$.

\begin{case}
There is a heavy vertex in $\bigcup_{1\leq i,j\leq
3}(V(\varPi_{ij}^a\backslash\{a_i,a_j\})$, or two of
$\{a_1,a_2,a_3\}$ are heavy.
\end{case}

Clearly every heavy vertex of $G$ is in $\bigcup_{1\leq i,j\leq
3}V(\varPi_{ij}^a)$, and at most one of $\{a'_i,b'_i\}\cup
V(\varPi_i)$ contains heavy pairs of $G$. We assume without loss of
generality that both $\{a'_1,b'_1\}\cup V(\varPi_1)$ and
$\{a'_2,b'_2\}\cup V(\varPi_2)$ contain no heavy pairs of $G$.

Let
$Q_1=b'_2b_2\overleftarrow{\varPi_{12}^b}b_1\overleftarrow{\varPi_1}a_1\varPi_{13}^aa_3a'_3$,
then $Q_1|_5$ is a bad $P_6$ to $\gamma_1$.

Suppose now that $\varPi_{12}^a=a_1x_1x_2\cdots x_{p-1}a_2$, where
$p$ is the length of $\varPi_{12}^a$.

\begin{subcase}
$p=1$, i.e., $\varPi_{12}^a=a_1a_2$.
\end{subcase}

Let $Q_2=(a'_1a_1a_2\varPi_2b_2\varPi_{23}^bb_3b'_3)|_5$. Then $Q_2$
is a bad $P_6$ to $\gamma_2$, $\gamma_3$.

\begin{subcase}
$p=2$, i.e., $\varPi_{12}^a=a_1x_1a_2$.
\end{subcase}

Let $Q_2=(a'_2a_2x_1a_1\varPi_1b_1b'_{13})|_5$. Then $Q_2$ is a bad
$P_6$ to $\gamma_2$, $\gamma_3$.

\begin{subcase}
$p=3$, i.e., $\varPi_{12}^a=a_1x_1x_2a_2$.
\end{subcase}

Let $Q_2=a'_1a_1x_1x_2a_2a'_2$. Then $Q_2$ is a bad $P_6$ to
$\gamma_2$, $\gamma_3$.

\begin{subcase}
$p\geq 4$.
\end{subcase}

Let
$Q_3=(x_1a_1\varPi_1b_1\varPi_{12}^bb_2\overleftarrow{\varPi_2}a_2x_{p-1})|_5$.
Then $Q_3$ is a bad $P_6$ to $\gamma_3$.

If one of $a_1,a_2$ is heavy in $G$, say $a_1$ is heavy, then $x_i$
($i\geq 3$) and $a_2$ are light. Thus
$Q_2=(a'_1a_1\varPi_{12}^a)|_5$ is a bad $P_6$ to $\gamma_2$. So we
assume that $a_1,a_2$ are light.

Recall that for each two vertices with distance at least 3 in
$\varPi_{12}^a$, at least one of them is light. This implies that
there exists an integer $i$, $2\leq i\leq p-2$, such that every vertex in
$V(\varPi_{12}^a)\backslash\{x_{i-1},x_i,x_{i+1}\}$ is light. Note
that either $\{x_{i-2},x_{i-1}\}$ or $\{x_{i+1},x_{i+2}\}$ is light
(we set $x_0=a_1$ and $x_p=a_2$). We assume without loss of
generality that $\{x_{i-2},x_{i-1}\}$ is light. Then
$Q_2=(x_{i-2}x_{i-1}\cdots x_{p-1}a_2a'_2)|_5$ is a bad $P_6$ to
$\gamma_2$.

The proof is complete.

\section{Proof of the `only if' part of Theorem \ref{ThIffi}}

Let $\gamma$ be a symmetrical graph on $\mathfrak{I}$ such that
every 2-connected claw-\emph{o}-heavy {and} $P_6$-$\gamma$-heavy graph is
hamiltonian. We will prove that $\gamma$ is a subgraph of
$\gamma_1$, $\gamma_2$ or $\gamma_3$. Assume not. Then for every
$k=1,2,3$, $E(\gamma)\backslash E(\gamma_k)\neq\emptyset$. Note that
the graphs in Figure \ref{FiCounterexample} are claw-\emph{o}-heavy
and non-hamiltonian. Hence they are not $P_6$-$\gamma$-heavy. Let
$P=u_1u_2\cdots u_6$ and $Q=v_1v_2\cdots v_6$ be two induced copies
of $P_6$ in a graph $G$ of order $n$. We say $P$ and $Q$ are
essentially same if for every $i,j\in[1,6]$, $d(u_i)+d(u_j)\geq n$
if and only if $d(v_i)+d(v_j)\geq n$.

\begin{figure}

\begin{center}
\begin{picture}(360,280)

\newcommand{\tuoyuan}[2]{\qbezier(#1,0)(#1,#2)(0,#2)
\qbezier(0,#2)(-#1,#2)(-#1,0) \qbezier(-#1,0)(-#1,-#2)(0,-#2)
\qbezier(0,-#2)(#1,-#2)(#1,0)}

\thicklines

\put(0,140){\put(30,80){\thinlines\tuoyuan{10}{50}}
\put(24,97){$K_r$} \put(30,60){\circle*{4}} \put(26,52){$w$}
\put(70,30){{\thinlines \qbezier(0,10)(-30,0)(-40,0)
\qbezier(0,10)(-30,100)(-40,100) \qbezier(0,50)(-30,0)(-40,0)
\qbezier(0,50)(-30,100)(-40,100) \qbezier(0,90)(-30,0)(-40,0)
\qbezier(0,90)(-30,100)(-40,100)}}
\put(70,40){\multiput(0,0)(0,40){3}{\put(0,0){\circle*{4}}
\put(40,0){\circle*{4}} \put(80,0){\circle*{4}}
\put(0,0){\line(1,0){80}} \qbezier(0,0)(40,20)(80,0)}
\put(80,0){\line(0,1){80}} \qbezier(80,0)(100,40)(80,80)
\put(74,-8){$x$} \put(36,-8){$x'$} \put(-2,-8){$x''$}
\put(74,32){$y$} \put(36,32){$y'$} \put(-2,32){$y''$}
\put(74,72){$z$} \put(36,72){$z'$} \put(-2,72){$z''$}}
\put(65,10){$G_1$ ($r\geq 7$)}}

\put(180,140){\put(30,64){\thinlines\tuoyuan{10}{34}}
\put(30,96){\thinlines\tuoyuan{10}{34}}
\put(30,30){\multiput(0,10)(0,16){6}{\circle*{4}}
\multiput(0,10)(0,32){3}{\qbezier[4](0,0)(0,8)(0,16)}
\put(0,10){\line(1,0){40}} \put(0,26){\line(5,-2){40}}
\put(0,10){\line(1,1){40}} \put(0,26){\line(5,3){40}}
\put(0,42){\line(5,1){40}} \put(0,58){\line(5,-1){40}}
\put(0,74){\line(5,-3){40}} \put(0,90){\line(1,-1){40}}
\put(0,74){\line(5,2){40}} \put(0,90){\line(1,0){40}}
\put(-10,12){$x_1$} \put(-10,28){$x_r$} \put(-10,44){$y_1$}
\put(-10,60){$y_s$} \put(-10,76){$z_1$} \put(-10,92){$z_t$}
\put(10,30){$K_{r+s}$} \put(10,62){$K_{s+t}$}}
\put(70,40){\multiput(0,0)(0,40){3}{\put(0,0){\circle*{4}}
\put(40,0){\circle*{4}} \put(80,0){\circle*{4}}}
\put(0,0){\line(1,0){80}} \qbezier(0,0)(40,-20)(80,0)
\put(0,80){\line(1,0){80}} \qbezier(0,80)(40,100)(80,80)
{\thinlines\put(40,40){\circle{20}} \put(50,37){$K_q$}
\qbezier(0,40)(30,50)(40,50) \qbezier(0,40)(30,30)(40,30)
\qbezier(80,40)(50,50)(40,50) \qbezier(80,40)(50,30)(40,30)}
\put(80,0){\line(0,1){80}} \qbezier(80,0)(100,40)(80,80)
\put(74,3){$x$} \put(36,3){$x'$} \put(-2,3){$x''$} \put(74,32){$y$}
\put(36,32){$y'$} \put(-2,32){$y''$} \put(74,72){$z$}
\put(36,72){$z'$} \put(-2,72){$z''$}} \put(-5,10){$G_2$ ($q\geq 6$,
$r,s\geq q+6$, $t\geq q+r+5$)}}

\put(90,0){\put(30,80){\thinlines\tuoyuan{10}{50}}
\put(24,97){$K_r$} \put(30,60){\circle*{4}} \put(26,52){$w$}
\put(70,30){\multiput(0,10)(0,16){6}{\circle*{4}}
\multiput(0,10)(0,32){3}{\qbezier[4](0,0)(0,8)(0,16)} {\thinlines
\qbezier(0,10)(-30,0)(-40,0) \qbezier(0,10)(-30,100)(-40,100)
\qbezier(0,26)(-30,0)(-40,0) \qbezier(0,26)(-30,100)(-40,100)
\qbezier(0,42)(-30,0)(-40,0) \qbezier(0,42)(-30,100)(-40,100)
\qbezier(0,58)(-30,0)(-40,0) \qbezier(0,58)(-30,100)(-40,100)
\qbezier(0,74)(-30,0)(-40,0) \qbezier(0,74)(-30,100)(-40,100)
\qbezier(0,90)(-30,0)(-40,0) \qbezier(0,90)(-30,100)(-40,100)}
\put(0,10){\line(1,0){40}} \put(0,26){\line(5,-2){40}}
\put(0,42){\line(5,1){40}} \put(0,58){\line(5,-1){40}}
\put(0,74){\line(5,2){40}} \put(0,90){\line(1,0){40}}
\put(2,12){$x_1$} \put(2,28){$x_k$} \put(2,44){$y_1$}
\put(2,60){$y_k$} \put(2,76){$z_1$} \put(2,92){$z_k$}}
\put(110,40){\multiput(0,0)(0,40){3}{\put(0,0){\circle*{4}}
\put(40,0){\circle*{4}} \put(80,0){\circle*{4}}
\put(0,0){\line(1,0){80}} \qbezier(0,0)(40,20)(80,0)}
\put(80,0){\line(0,1){80}} \qbezier(80,0)(100,40)(80,80)
\put(74,-8){$x$} \put(36,-8){$x'$} \put(-2,-8){$x''$}
\put(74,32){$y$} \put(36,32){$y'$} \put(-2,32){$y''$}
\put(74,72){$z$} \put(36,72){$z'$} \put(-2,72){$z''$}}
\put(45,10){$G_3$ ($k\geq 8$ and $r\geq 3k+7$)}}

\end{picture}

\caption{Three classes of claw-\emph{o}-heavy non-hamiltonian
graphs}\label{FiCounterexample}
\end{center}
\end{figure}

\begin{claim}\label{Cl1If}
None of $\{22,23,24,33,34,35,44,45,55\}$ is in $E(\gamma)$.
\end{claim}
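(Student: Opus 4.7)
To prove Claim \ref{Cl1If} I proceed by contradiction: if some $ij\in\{22,23,24,33,34,35,44,45,55\}$ belonged to $E(\gamma)$, symmetry of $\gamma$ would force $\{ij,(7-i)(7-j)\}\subseteq E(\gamma)$, and it would suffice to exhibit a 2-connected claw-\emph{o}-heavy non-hamiltonian graph that is $P_6$-$\{ij,(7-i)(7-j)\}$-heavy, since such a graph is automatically $P_6$-$\gamma$-heavy and so contradicts the assumed characterization of $\gamma$. Up to the involution $k\mapsto 7-k$ the nine listed edges form only five orbits, namely $\{22,55\}$, $\{23,45\}$, $\{24,35\}$, $\{33,44\}$, and the self-symmetric $\{34\}$, so I need to produce only five counterexamples. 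These will be drawn from the three families $G_1,G_2,G_3$ of Figure \ref{FiCounterexample}, whose parameters are chosen precisely to make them claw-\emph{o}-heavy and non-hamiltonian while tightly controlling which vertices are heavy.

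For each orbit I would first assemble a degree table for one representative of each vertex orbit of the chosen $G_k$ under its obvious automorphism group. The lower bounds $r\ge 7$ in $G_1$, $q\ge 6$, $r,s\ge q+6$, $t\ge q+r+5$ in $G_2$, and $k\ge 8$, $r\ge 3k+7$ in $G_3$ are calibrated so that only the large clique vertices (and possibly $w$) are heavy, while all tail vertices $x,x',x'',\ldots,z,z''$ are light. Second, I would enumerate the induced copies of $P_6$ in $G_k$ up to the relation of being essentially same, which collapses the enumeration to a small set of representatives because vertices of a single clique or a single "layer" of $G_k$ interchange freely under automorphisms and hence yield essentially identical degree profiles along any $P_6$ through them. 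Third, for each representative $v_1v_2\cdots v_6$ I would verify that either $\{v_i,v_j\}$ or $\{v_{7-i},v_{7-j}\}$ is a heavy pair in $G_k$.

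The main obstacle is matching each orbit with a family $G_k$ that has enough degree asymmetry to cover every induced $P_6$, and then doing the bookkeeping across the multi-clique structure of $G_2$ and $G_3$ without missing a case. The parameter bounds in Figure \ref{FiCounterexample} are engineered so that any induced $P_6$ must begin and end in the light tail region, and the heavy vertices it can touch are confined to positions $2,3,4,5$; this restriction is exactly what makes each inner edge $ii$, $i(i{+}1)$, $i(i{+}2)$ with $2\le i\le 5$ realisable by some $P_6$ in an appropriate $G_k$. I expect the most delicate case to be the self-symmetric edge $\{34\}$, since it requires an induced $P_6$ whose heavy pair straddles the central edge $v_3v_4$; here the nested cliques $K_q,K_{r+s},K_{s+t}$ of $G_2$ (or the regularly spaced layers $x_i,y_i,z_i$ of $G_3$) are designed so that such a central heavy pair is forced, while the remaining four orbits are dispatched more easily by $G_1$ or $G_3$ after a short degree computation.
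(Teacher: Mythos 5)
Your overall strategy --- reduce by the symmetry $k\mapsto 7-k$ to a few orbits and, for each orbit $O$, exhibit a $2$-connected claw-\emph{o}-heavy non-hamiltonian graph that is $P_6$-$O$-heavy --- cannot be carried out, and this is a genuine gap rather than a bookkeeping issue. Two of your five orbits, namely $\{34\}$ and $\{24,35\}$, are contained in $E(\gamma_1)=\{ij: i\in\{1,2,3\},\ j\in\{4,5,6\}\}$. By the `if' part of Theorem \ref{ThIffi} (already in the form of Theorem \ref{ThopHeavy}), \emph{every} $2$-connected claw-\emph{o}-heavy graph that is $P_6$-$\{34\}$-heavy (or $P_6$-$\{24,35\}$-heavy) is hamiltonian. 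So the counterexample you propose to construct for these orbits provably does not exist, no matter how the parameters of $G_1,G_2,G_3$ are tuned. (Concretely, in $G_1$ the induced $P_6$ of type $x'xyy''wz''$ has its heavy vertices exactly in positions $4,5,6$, so neither $\{v_3,v_4\}$ nor $\{v_2,v_4\}$ nor $\{v_3,v_5\}$ is a heavy pair; the analogous failure occurs in $G_2$ and $G_3$.) Your side remark that the heavy vertices of an induced $P_6$ are confined to positions $2,\dots,5$ is also false: in $G_1$ the path $x''wy''yzz'$ begins at a heavy vertex.

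The missing idea is that Claim \ref{Cl1If} is not provable edge-by-edge in isolation: one must combine the supposed middle edge with the standing hypothesis $E(\gamma)\setminus E(\gamma_1)\neq\emptyset$, which by symmetry of $\gamma$ yields an edge of $\gamma$ in $\{11,12,13,22,23,33\}$ \emph{and} one in $\{44,45,46,55,56,66\}$, while the supposed edge from the nine-element set yields (again by symmetry) an edge in $\{22,23,24,33,34,44\}$ and one in $\{33,34,35,44,45,55\}$. The paper then uses the single graph $G_1$: its four essentially different induced copies of $P_6$ have heavy positions $\{1,2,3\}$, $\{2,3,4\}$, $\{3,4,5\}$ and $\{4,5,6\}$ respectively, and the four edge-classes above cover these four cases, making $G_1$ $P_6$-$\gamma$-heavy and hence contradicting the hypothesis on $\gamma$. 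Without invoking $E(\gamma)\setminus E(\gamma_1)\neq\emptyset$, the paths with heavy positions $\{1,2,3\}$ and $\{4,5,6\}$ cannot be certified, which is exactly where your orbit-by-orbit plan breaks down.
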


\begin{proof}
Recall that $E(\gamma)\backslash E(\gamma_1)\neq\emptyset$, i.e.,
one of $\{11,12,13,22,23,33,44,45,46,55,56,66\}$ is in $E(\gamma)$.
Since $\gamma$ is symmetrical, one of $\{11,12,13,22,23,33\}$ is in
$E(\gamma)$ and one of $\{44,45,46,55,56,66\}$ is in $E(\gamma)$.

Suppose that one of $\{22,23,24,33,34,35,44,45,55\}$ is in
$E(\gamma)$. Since $\gamma$ is symmetrical, one of
$\{22,23,24,33,34,44\}$ is in $E(\gamma)$ and one of
$\{33,34,35,44,45,55\}$ is in $E(\gamma)$. Consider the graph $G_1$.
Let $P=v_1v_2\cdots v_6$ be an induced path of $G_1$, and let $ij$
be an edge in $E(\gamma)$ such that
$$ij\in\left\{\begin{array}{ll}
\{11,12,13,22,23,33\}, & \mbox{ if } P=x'xyy''wz'';\\
\{22,23,24,33,34,44\}, & \mbox{ if } P=xyy''wz''z';\\
\{33,34,35,44,45,55\}, & \mbox{ if } P=x'x''wy''yz;\\
\{44,45,46,55,56,66\}, & \mbox{ if } P=xx''wy''yzz'.
\end{array}\right.$$
Then $d(v_i)+d(v_j)\geq|V(G_1)|$. Note that $G_1$ has only the four
essentially different induced copies of $P_6$. This implies that
$G_1$ is $P_6$-$\gamma$-heavy, a contradiction.
\end{proof}

Let $\mathfrak{E}_1=\{22,23,24,33,34,35,44,45,55\}$. Then for
$k=1,2,3$,
$E(\gamma)\backslash(E(\gamma_k)\cup\mathfrak{E}_1)\neq\emptyset$.
Note that
$E(\overline{\gamma_2})\backslash\mathfrak{E}_1=\{13,46\}$. Since
$\gamma$ is symmetrical, we can see that $13,46\in E(\gamma)$.

\begin{claim}\label{Cl2If}
None of $\{11,16,66\}$ is in $E(\gamma)$.
\end{claim}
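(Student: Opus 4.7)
The plan is to derive a contradiction by showing that the counterexample graph $G_3$ of Figure \ref{FiCounterexample}, which is 2-connected, claw-\emph{o}-heavy, and non-hamiltonian, becomes $P_6$-$\gamma$-heavy whenever any one of $\{11,16,66\}$ lies in $E(\gamma)$; this will contradict the hypothesis on $\gamma$. Throughout I would use the already-established facts $\{13,46\}\subseteq E(\gamma)$ and $E(\gamma)\cap\mathfrak{E}_1=\emptyset$.

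First I would record the degrees in $G_3$ (with $n=r+3k+9$, $k\geq 8$, $r\geq 3k+7$): the heavy vertices are exactly the $K_r$-vertices (each of degree $r-1+3k$) and the leg-vertices $\{x_j,y_j,z_j:1\leq j\leq k\}$ (each of degree $r+1$); a pair of leg-vertices has degree sum $2(r+1)\geq n$ (using $r\geq 3k+7$); a pair $(w,x'')$ with $w\in K_r$ has degree sum $(r-1+3k)+(k+2)=r+4k+1\geq n$ (using $k\geq 8$); and the remaining grid vertices $\{x,x',y,y',z,z'\}$ participate in no heavy pair.

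Next I would classify the essentially different induced $P_6$'s of $G_3$ by the number of $K_r$-vertices they contain, which is zero or one since $K_r$ is a clique. In the \emph{Type A} case (exactly one $K_r$-vertex $w$ in the path), the neighbours of $w$ on the path must lie in $\bigcup_j\{x_j,y_j,z_j\}$, and by chord arguments through the triangles $\{x,x',x''\}$, $\{y,y',y''\}$, $\{z,z',z''\}$ and through the central triangle $\{x,y,z\}$, the next-next vertex is forced into the matching $\{x'',y'',z''\}$-element. A direct degree check then produces a heavy pair at position $(1,3)$ when $w$ occupies one of positions $1,2,3$ and at position $(4,6)$ when $w$ occupies one of positions $4,5,6$; in either case such a path is covered by $13$ or $46\in E(\gamma)$. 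In the \emph{Type B} case (no $K_r$-vertex in the path), the same triangle constraints together with the fact that the legs $\{x_j\}$, $\{y_j\}$, $\{z_j\}$ attach only to $K_r$ and to $x'',y'',z''$ respectively leave essentially the unique shape $a_1a''abb''b_1$, with $\{a,b\}\subset\{x,y,z\}$ distinct and $a_1,b_1$ chosen from the corresponding leg-sets; since only the endpoints $a_1,b_1$ are heavy, the heavy pairs occur precisely at positions $(1,1)$, $(1,6)$, and $(6,6)$.

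Combining the two types, if any edge of $\{11,16,66\}$ lies in $E(\gamma)$ then every Type A path of $G_3$ is covered by $13$ or $46$ and every Type B path is covered by the assumed edge, so $G_3$ is $P_6$-$\gamma$-heavy, a contradiction. The principal technical obstacle is verifying that the Type B enumeration is exhaustive -- systematically ruling out paths that traverse $x'$, $y'$, or $z'$, that use two leg-vertices on the same side (forcing a $K_r$-chord), or that attempt to cross the central triangle via a different route -- by careful use of the triangle chords and the absence of $x''$-to-$K_r$ edges in $G_3$.
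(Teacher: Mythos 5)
Your proof is correct and takes essentially the same approach as the paper: both exhibit the same contradiction by verifying that $G_3$ is $P_6$-$\gamma$-heavy, using the already-established edges $13,46\in E(\gamma)$ to cover the induced $P_6$'s containing a $K_r$-vertex and the assumed edge from $\{11,16,66\}$ to cover the unique remaining shape $a_1a''abb''b_1$. The only difference is presentational -- the paper simply lists the seven essentially different induced $P_6$'s and assigns a covering edge to each, whereas you justify the enumeration by classifying paths according to the position of the $K_r$-vertex; your degree computations and heavy-pair inventory for $G_3$ all check out.
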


\begin{proof}

Suppose not. Since $\gamma$ is symmetrical, we can see that one of
$\{11,16\}$ is in $E(\gamma)$ and one of $\{16,66\}$ is in
$E(\gamma)$.

Consider the graph $G_3$. Let $P=v_1v_2\cdots v_6$ be an induced
path of $G_3$, and let $ij$ be an edge in $E(\gamma)$ such that
$$ij=\left\{\begin{array}{ll}
13,                 & \mbox{ if } P=wx_1x''xyy';\\
11 \mbox{ or } 16,  & \mbox{ if } P=x_1x''xyy''y_1;\\
46,                 & \mbox{ if } P=x'xyy''y_1w;\\
46,                 & \mbox{ if } P=xyy''y_1wz_1;\\
46,                 & \mbox{ if } P=x'x''x_1wy_1y'';\\
13,                 & \mbox{ if } P=x''x_1wy_1y''y';\\
13,                 & \mbox{ if } P=x_1wy_1y''yz.
\end{array}\right.$$
Then $d(v_i)+d(v_j)\geq|V(G_3)|$. Note that $G_3$ has only the seven
essentially different induced copies of $P_6$. This implies that
$G_3$ is $P_6$-$\gamma$-heavy, a contradiction.
\end{proof}

Let $\mathfrak{E}_2=\mathfrak{E}_1\cup\{11,16,66\}$. By Claims
\ref{Cl1If} and \ref{Cl2If},
$E(\gamma)\backslash(E(\gamma_3)\cup\mathfrak{E}_2)\neq\emptyset$.
Note that
$E(\overline{\gamma_3})\backslash\mathfrak{E}_2=\{12,56\}$. Since
$\gamma$ is symmetrical, we can see that $12,56\in E(\gamma)$.

Let $\gamma'$ be a graph on $\mathfrak{I}$ with edge set
$E(\gamma')=\{12,13,46,56\}$. Then $\gamma'$ is a subgraph of
$\gamma$. Similarly as in Claim \ref{Cl2If}, one can check that
$G_2$ is $P_6$-$\gamma'$-heavy, and then is $P_6$-$\gamma$-heavy, a
contradiction. This completes the proof of the `only if' part of
Theorem \ref{ThIffi}.

\end{document}